\newcommand{\E}{{\mathbb{E}}}
\newcommand{\R}{{\mathbb{R}}}
\begin{document}

\markboth{Qingsong Wang, A. A. Dorogovtsev, K. V. Hlyniana, Naoufel Salhi}{The Geometry of Gaussian Random Curves}


\title{The Geometry of Gaussian Random Curves}

\author{Qingsong Wang}

\address{School of Mathematics, Jilin University, Changchun, 130012, P.~R. China.\\
qswang21@mails.jlu.edu.cn}

\author{A. A. Dorogovtsev}

\address{Institute of Mathematics, National Academy of Sciences of Ukraine, Ukraine.\\
andrey.dorogovtsev@gmail.com}

\author{K. V. Hlyniana}

\address{School of Mathematics, Jilin University, Changchun, 130012, P.~R. China.\\
glinkate@gmail.com}

\author{Naoufel Salhi}

\address{Research laboratory of Stochastic Analysis, university of Tunis El Manar, Tunisia.\\
salhi.naoufel@gmail.com}
\maketitle

\begin{abstract}
In this paper, we investigate some geometric properties of non-smooth random curves within a stochastic flow. We consider a polygonal line $\Gamma(\vec{u}_{1},\cdots,\vec{u}_{n})$, which connects the points \(\vec{u}_{1},\cdots,\vec{u}_{n}\in{\mathbb{R}^{d}}\) and is inscribed in a Brownian trajectory. Subsequently, we estimate the probability that a polygonal line is almost inscribed in a Brownian trajectory. Next, we turn to the study of the self-intersection local time of Brownian motion and demonstrate the asymptotic result of its conditional expectation as the size of the polygonal line increases. Finally, taking such a Brownian trajectory as the initial curve, we let it evolve according to the solution of the equation with interaction. Then, we prove that its visitation density exhibits an intermittency phenomenon.
\end{abstract}

\keywords{random curve;  Brownian motion; self-intersection local time; SDE with interaction; stochastic flow; intermittency phenomenon.}

\ccode{AMS Subject Classification: 60G15, 60J45, 60J65}

\section{Introduction}
Since the 1970s, stochastic flows have been intensively studied as a model for the turbulent motion of gas or liquid. For instance, several works \cite{1,2,3,4,5} have explored this area. To understand the mutual behavior of particles in a flow, one can consider the evolution of sets of passive tracers in the flow. In the paper \cite{6} , a smooth manifold was placed in the flow, and the change in its curvatures was discussed. In the paper \cite{7} , the authors considered the evolution of the mass distribution in a flow and obtained the asymptotics of its moments. In these works, the particles in the flow were passive. That is, their positions or mass distribution did not change the flow. These studies have laid a foundation for further exploration of more complex scenarios in stochastic flows. In the works of A. A. Dorogovtsev \cite{8,9,10,11} , a special class of stochastic flows with interactions was introduced. Such flows change their characteristics depending on the mass distribution of the particles they transport. This framework offers new possibilities for modeling the motion of different objects in a turbulent flow, particularly when the characteristics of an object influence the behavior of the flow. One of the most interesting applications is the modeling of the motion of long linear polymers. Usually, these molecules are modeled by the non-smooth random curves. The geometry of such curves can be described using their self-intersection local times, as reported in \cite{12} . Here, we propose to describe the shape of the random curve using a polygonal line inscribed in it. In this paper, we consider a polygonal line $\Gamma(\vec{u_{1}},\cdots,\vec{u_{n}})$ inscribed in a Brownian trajectory. Firstly, we estimate the probability that a polygonal line is almost inscribed in a Brownian trajectory. Secondly, we examine the behavior of a common characteristic of the Brownian trajectory when the polygonal line inscribed in it. Here, we propose to consider the self-intersection local time and investigate how this self-intersection local time evolves as the size of the polygonal line increases. Thirdly, taking such a Brownian trajectory as the initial curve, we let it evolve according to the solution of the equation with interaction. Then, we prove that its visitation densities exhibit an intermittency phenomenon as time tends to infinity.

\section{The geometry of Brownian trajectory}

In physical and chemical fields, random curves are used to describe polymers, whose configurations evolve over time. The reason\cite{Hol09} is that: first, there are flexible chemical bonds between monomers; second, there is the influence of external force (such as stochastic interactions with the molecules of the medium); and last, there is the interaction between the different parts of the polymer. One of the possible mathematical descriptions of a linear polymer is a random curve parameterized by $\vec{\xi}:[0,1]\rightarrow{\R^{d}}$, which is a continuous random process. Denote the random curve by $\gamma$. Usually $\vec{\xi}$ is non-smooth and $\gamma$ has complicated fractal structure. In this case, $\gamma$ can be characterized by its visitation measures.
\begin{definition}
The visitation measures of $\vec{\xi}$ are
\begin{small}
\begin{equation*}
\begin{aligned}
&\mu^{1}(\Delta)=\int_{0}^{1}\mathbf{1}_{\Delta}(\vec{\xi}(t))dt, \ \Delta\in\mathcal{B}(\R^{d}),\\
&\cdots\\
&\mu^{k}(\Delta)=\int_{0}^{1}\cdots\int_{0}^{1}\mathbf{1}_{\Delta}[(\vec{\xi}(t_{2})-\vec{\xi}(t_{1})),\cdots,(\vec{\xi}(t_{k})-\vec{\xi}(t_{k-1}))]dt_{1}\cdots{dt_{k}}, \  \Delta\in\mathcal{B}(\R^{d(k-1)}).
\end{aligned}
\end{equation*}
\end{small}
\end{definition}
Let $\Delta$ be a small ball with radius $\varepsilon$ and center at the origin $0$. Then, $\mu^{k}$ can be regarded as the measure of time which $\vec{\xi}$ spends in the neighborhood of (k-1)-fold intersections.

In the next example, we will investigate how the visitation measures evolve when the Brownian trajectory is transformed by the smooth function.

\begin{example}\label{thm3}
Consider the Brownian motion $w=(w_{1},\cdots,w_{d})$ in $\R^{d}$. Assume that its visitation measure $\mu^{k}$ has a density $l^{k}$ (see \cite{13,220} ). For example, when $k=1$ and $d=1$, as well as  $k=2$ and $d\leq 3$ the visitation measure has a density. Denote $d\mu^{k}=l^{k}d\lambda$, where $\lambda$ is the corresponding Lebesgue measure. Define a new process $\eta=\varphi(w)$, where $\varphi=(\varphi_{1},\cdots,\varphi_{d})$ is a continuously differentiable vector-valued function belonging to the function space $\mathcal{C}^{1}(\R^{d})$. The function $\varphi$ satisfies the following condition: for any $y_{1},\cdots,y_{d}\in \R^{d}$, form a matrix by taking $\nabla\varphi(y_{i})$ as the $i$-th row, and the determinant of this matrix is non-zero. (For example, for $z=(z_{1},\dots,z_{d})\in \R^{d}$, define $\varphi_{1}=z_{1}+2z_{2}+\cdots+dz^{2}_{d}$, $\varphi_{2}=2z_{1}+z_{2}+\cdots+dz^{2}_{d}$, $\cdots$, $\varphi_{d-2}=(d-2)z_{1}+2z_{2}+\cdots+dz^{2}_{d}$,  $\varphi_{d-1}=(d-1)z_{1}+2z_{2}+\cdots+dz_{d}$,  $\varphi_{d}=dz_{1}+2z_{2}+\cdots+z_{d}$, wherein the coefficients of function $\varphi_{1}$ range from $1$ to $d$, the coefficients of function $\varphi_{i}$ are obtained by swapping the first coefficient and the $i$-th coefficient of the coefficients of function $\varphi_{1}$.  Then $\varphi=(\varphi_{1},\cdots,\varphi_{d})$ will satisfies the condition.) For the process $\eta$, its visitation measure $\mu_{\eta}^{k}$ has a density $l_{\eta}^{k}$, our primary objective is to determine the density $l_{\eta}^{k}$.

For $x=(x_{1},\cdots,x_{d})\in \R^{d}$, the $l^{k}$, which coincides with local time (see \cite{27,28}), can be expressed as follows:
\begin{equation*}
\begin{aligned}
l^{k}(x)=\lim_{\varepsilon\rightarrow0}\int_{0}^{1}\cdots\int_{0}^{1}\prod_{j=1}^{k-1}f_{\varepsilon}(w(t_{j+1})-w(t_{j})-x)dt_{1}\cdots{d}t_{k},
\end{aligned}
\end{equation*}
where we can take 
\begin{equation*}
\begin{aligned}
f_{\varepsilon}(x)=\frac{1}{{\rm vol}(B(0,\varepsilon))}\mathbf{1}_{B(0,\varepsilon)}(x),
\end{aligned}
\end{equation*}
and ${\rm vol}(B(0,\varepsilon))$ denotes the volume of $B(0,\varepsilon)$, $B(0,\varepsilon)$ is the ball centered at $0$ with radius $\varepsilon$. If the visitation density of the process $\eta$ can also be found as a corresponding local time, we consider the following approximation,
\begin{equation*}
\begin{aligned}
l_{\eta}^{k}(x)&=\lim_{\varepsilon\rightarrow0}\int_{0}^{1}\cdots\int_{0}^{1}\prod_{j=1}^{k-1}f_{\varepsilon}(\varphi(w(t_{j+1}))-\varphi(w(t_{j}))-x)dt_{1}\cdots{d}t_{k}.
\end{aligned}
\end{equation*}
To show the relationship between $l^{k}$ and $l_{\eta}^{k}$, first, we investigate the indicator part of $f_{\varepsilon}(\varphi(w(t_{j+1}))-\varphi(w(t_{j}))-x)$,
\begin{equation}\label{2.24}
\begin{aligned}
&\mathbf{1}_{B(0,\varepsilon)}(\varphi(w(t_{j+1}))-\varphi(w(t_{j}))-x)\\
    =&\left\{
    \begin{array}{lcl}
        1, \quad \sqrt{\sum_{n=1}^{d}(\varphi_{n}(w(t_{j+1}))-\varphi_{n}(w(t_{j}))-x_{n})^{2}}\leq\varepsilon, \\
        0, \quad {\rm otherwise}.
      \end{array}
    \right.
\end{aligned}
\end{equation}
Now for every $\varphi_{n}:\R^{d}\rightarrow\R$, by the mean value theorem, there exists a $\theta^{i}_{n,j}(\omega)$ such that $\theta^{i}_{n,j}(\omega)=(1-c^{i}_{n,j}(\omega))w_{i}(t_{j+1})+c^{i}_{n,j}(\omega)w_{i}(t_{j})$ for some $c^{i}_{n,j}(\omega)\in[0,1]$. Denote $\theta_{n,j}=(\theta^{1}_{n,j}(\omega),\cdots,\theta^{d}_{n}(\omega))$, then
\begin{equation*}
\begin{aligned}
\sum_{n=1}^{d}(\varphi_{n}(w(t_{j+1}))-\varphi_{n}(w(t_{j}))-x_{n})^{2}=\sum_{n=1}^{d}(\langle \nabla\varphi_{n}(\theta_{n,j}),w(t_{j+1})-w(t_{j}) \rangle- x_{n})^{2}.
\end{aligned}
\end{equation*}
Let $D_{\varphi}(\theta)$ be the matrix such that
\begin{equation*}
    D_{\varphi}(\theta)=
    \left(
      \begin{array}{lcl}
        \nabla&\varphi_{1}(\theta_{1,j}) \\
        &\vdots  \\
        \nabla&\varphi_{d}(\theta_{d,j}) \\
      \end{array}
    \right).
    \end{equation*}
Here $D_{\varphi}(\theta)$ is a $d\times d$ square matrix, and $\det(D_{\varphi}(\theta))\neq0$, so the matrix $D_{\varphi}(\theta)$ is invertible. Thus we can rewrite (\ref{2.24}) as follows:
\begin{equation*}
\begin{aligned}
	&\mathbf{1}_{B(0,\varepsilon)}(\varphi(w(t_{j+1}))-\varphi(w(t_{j}))-x)\\
    =&\left\{
    \begin{array}{lcl}
        1, \quad \|D_{\varphi}(\theta)(w(t_{j+1})-w(t_{j}))^{\top}-x^{\top}\|\leq\varepsilon, \\
        0, \quad {\rm otherwise},
      \end{array}
    \right.\\
    =&\left\{
    \begin{array}{lcl}
        1, \quad (w(t_{j+1})-w(t_{j}))^{\top}-{D}^{-1}_{\varphi}(\theta)x^{\top}\in{D}^{-1}_{\varphi}(\theta){B}(0,\varepsilon), \\
        0, \quad {\rm otherwise},
      \end{array}
    \right.\\
    =&\mathbf{1}_{{D}^{-1}_{\varphi}(\theta){B}(0,\varepsilon)}((w(t_{j+1})-w(t_{j}))^{\top}-{D}^{-1}_{\varphi}(\theta)x^{\top}).
\end{aligned}
\end{equation*}
Denote $y=(w(t_{j+1})-w(t_{j}))^{\top}-{D}^{-1}_{\varphi}(\theta)x^{\top}$, 
\begin{equation*}
\begin{aligned}
&g_{\varepsilon}(y)
=\frac{1}{{\rm vol}({D}^{-1}_{\varphi}(\theta){B}(0,\varepsilon))}\mathbf{1}_{{D}^{-1}_{\varphi}(\theta){B}(0,\varepsilon)}(y),
\end{aligned}
\end{equation*}
then the density $l_{\eta}^{k}$ at ${D}^{-1}_{\varphi}(\theta)x^{\top}$ can be expressed as follows:
\begin{equation*}
\begin{aligned}
l_{\eta}^{k}({D}^{-1}_{\varphi}(\theta)x^{\top})=\lim_{\varepsilon\rightarrow0}\int_{0}^{1}\cdots\int_{0}^{1}\prod_{j=1}^{k-1}g_{\varepsilon}((w(t_{j+1})-w(t_{j}))^{\top}-{D}^{-1}_{\varphi}(\theta)x^{\top})dt_{1}\cdots{d}t_{k}.
\end{aligned}
\end{equation*}
So we have
\begin{equation*}
\begin{aligned}
l_{\eta}^{k}(x)=&\lim_{\varepsilon\rightarrow0}\int_{0}^{1}\cdots\int_{0}^{1}\prod_{j=1}^{k-1}g_{\varepsilon}((w(t_{j+1})-w(t_{j}))^{\top}-{D}^{-1}_{\varphi}(\theta)x^{\top})\\
               &(\frac{{\rm vol}({D}^{-1}_{\varphi}(\theta){B}(0,\varepsilon))}{{\rm vol}(B(0,\varepsilon))})^{k-1}dt_{1}\cdots{d}t_{k}\\
               =&\frac{l^{k}({D}^{-1}_{\varphi}(x)x^{\top})}{(\det{D}_{\varphi}(x))^{k-1}},
\end{aligned}
\end{equation*}
where ${D}_{\varphi}(x)$ is the Jacobi matrix of $\varphi$ at $x$.
\end{example}

Thus, we have shown how the visitation density evolves when the Brownian trajectory is changed by a smooth function $\varphi$.

One can model long linear polymers using the trajectories of a stochastic process \cite{Hol09,Bol02} .  In general, the shape of trajectories of stochastic process is very complicated, even for Brownian motion, because it is not differentiable. It is reasonable to consider the regularity in chaotic structure of the random curve. To understand it, we must discuss several questions. How can a simple geometric structure emerge in the non-smooth curve during its motion? When we consider the random curve in a stochastic flow, it will change its geometrical shape, so we need to understand how the visitation measures of the curve reflect this change, and what happens with the densities of visitation measures?

In the result presented in reference \cite{13} , for the two-dimensional case, it is shown that  $\mathbb{E}l^{2}({x})\rightarrow\infty$ as  ${x}\rightarrow 0$, where $l({x})$ represents the local time of the Brownian motion. This behavior bears a striking resemblance to a phenomenon known as intermittency. Specific structures emerge in random media due to instabilities. In these structures, a growing quantity attains extremely high values. Despite the rarity of these concentrations, they dominate the integral characteristics of the growing quantity, such as the mean value and the mean square value. The appearance of such structures is defined as intermittency \cite{Ya88} . In reference \cite{14} , X. Chen investigated the intermittency for Brownian motion in random media. Importantly, these concentration properties of energy are referred to as intermittency in the context of random processes and as a localization property for the polymer measure. In our description, the asymptotic result of the density $l^{2}$ for Brownian motion shows that this density is almost concentrated on the point $0$, which also means that the measure $\mu^{2}$ becomes close to the delta function.

If the curve is complicated, conventional geometric methods are often inadequate. However, if we can inscribe the curve with a polygonal line,we can leverage standard topological and geometric features to study the geometry of the polygonal line, thereby gaining insights into the characteristics of the original curve.

Suppose that the curve $\gamma$ is parametrized by the continuous function $\vec{\xi}:[0,1]\rightarrow{\R^{d}}$. For the points $\vec{u}_{1},\cdots,\vec{u}_{n}\in{\R^{d}}$, denote the polygonal line connecting $\vec{u}_{1}$ to $\vec{u}_{n}$ in order by $\Gamma(\vec{u}_{1},\cdots,\vec{u}_{n})$. 

\begin{definition}
The polygonal line $\Gamma(\vec{u}_{1},\cdots,\vec{u}_{n})$ is inscribed in the curve $\gamma$ if there exist $0\leq{t_{1}}<\cdots<t_{n}\leq1$ such that
\begin{equation*}
\begin{aligned}
&\vec{\xi}(t_{k})=\vec{u}_{k},\quad k=1,\cdots,n.
\end{aligned}
\end{equation*}
\end{definition}

\begin{definition}
The polygonal line $\Gamma(\vec{u}_{1},\cdots,\vec{u}_{n})$ is inscribed in the curve $\gamma$ up to the $\varepsilon>0$, if there exist $0\leq t_{1}<\cdots<t_{n}\leq1$, such that
\begin{equation*}
\begin{aligned}
&\vec{\xi}(t_{k})\in B(\vec{u}_{k},\varepsilon),\quad k=1,\cdots,n.
\end{aligned}
\end{equation*}
\end{definition}

\begin{definition}
Define the set $Z^{\varepsilon}=\cup_{k=1}^{N}B(\vec{z}_{k},\varepsilon)$, taking $\varepsilon>0$ such that
\begin{equation*}
\begin{aligned}
{B}(\vec{z}_{k},\varepsilon)\cap{B}(\vec{z}_{j},\varepsilon)=\phi,\quad k\neq{j},\quad k,j=1,\cdots,N.
\end{aligned}
\end{equation*}
Then the sequence of numbers $\alpha(1),\cdots,\alpha(n)$ can be associated with every $\Gamma(\vec{u}_{1},\cdots,\vec{u}_{n})$ which has the vertices from the set $Z^{\varepsilon}$ , here
\begin{equation*}
\begin{aligned}
\forall{j}=1,\cdots,n \quad \vec{u}_{j}\in{B}(\vec{z}_{\alpha(j)},\varepsilon).
\end{aligned}
\end{equation*}
In this case we say that the polygonal line $\Gamma$ has a type $\alpha(1),\cdots,\alpha(n)$ up to $\varepsilon$.
\end{definition}

\begin{definition}
The curve $\gamma$ has at least type $\alpha(1),\cdots,\alpha(n)$ up to $\varepsilon$ if the polygonal line of such type up to $\varepsilon$ can be inscribed in it up to $\varepsilon$.
\end{definition}

Next we will investigate the probability of a Brownian motion having at least of type $\alpha(1),\cdots,\alpha(n)$ for large $n$.

\begin{theorem}\label{thm2.8}
For any fixed $\{Z_{k}\}_{k\geq1}$, suppose that the random curve $\{\gamma(r),r\in[0,1]\}$ is the trajectory of a standard Brownian motion in $\R^{d}$, then 
\begin{equation*}
	\begin{aligned}
		\lim_{n\rightarrow+\infty}n^{-2} \ln {P}\{\gamma \ {\rm has} \ {\rm at} \ {\rm least}  \ {\rm type} \ \alpha(1),\cdots,\alpha(n) \ {\rm up} \ {\rm to} \ \varepsilon\}
		\geq-\frac{\|Z_{\alpha_{j\ast}}-Z_{\alpha_{j\ast-1}}\|^{2}}{2},
	\end{aligned}
\end{equation*}
where $Z_{\alpha_{j\ast}}-Z_{\alpha_{j\ast-1}}$ is selected from the set $\{Z_{\alpha_{j}}-Z_{\alpha_{j-1}}\}$ satisfing $\max_{j}\|Z_{\alpha_{j}}-Z_{\alpha_{j-1}}\|=\|Z_{\alpha_{j\ast}}-Z_{\alpha_{j\ast-1}}\|$.
\end{theorem}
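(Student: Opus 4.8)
\noindent\emph{Proof idea.} The statement is a lower bound on a probability, so the plan is to exhibit one explicit way in which the trajectory can realise the prescribed type and to estimate the cost of that event. First I would reduce the qualitative event to a hitting requirement at deterministic times. If $0\le t_{1}<\cdots<t_{n}\le 1$ are such that $\gamma(t_{j})\in B(Z_{\alpha(j)},\varepsilon)$ for every $j$, then the choice $\vec{u}_{j}:=\gamma(t_{j})$ produces a polygonal line $\Gamma(\vec{u}_{1},\cdots,\vec{u}_{n})$ of type $\alpha(1),\cdots,\alpha(n)$ up to $\varepsilon$ which is inscribed in $\gamma$ up to $\varepsilon$; hence $\gamma$ has at least this type. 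Taking the equally spaced times $t_{j}=j/n$ and writing $a_{j}=\|Z_{\alpha(j)}-Z_{\alpha(j-1)}\|$ with the convention $Z_{\alpha(0)}=0=\gamma(0)$, it therefore suffices to bound from below
\begin{equation*}
p_{n}:=P\bigl\{\gamma(j/n)\in B(Z_{\alpha(j)},\varepsilon),\ j=1,\cdots,n\bigr\}.
\end{equation*}

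Next I would factorise $p_{n}$ using the Markov property and the independence of the increments of $\gamma$. Conditioning successively on $\gamma((j-1)/n)$ gives
\begin{equation*}
p_{n}\ \ge\ \prod_{j=1}^{n}\ \inf_{x\in \overline{B}(Z_{\alpha(j-1)},\varepsilon)} P_{x}\bigl\{\gamma(1/n)\in B(Z_{\alpha(j)},\varepsilon)\bigr\},
\end{equation*}
so the whole problem reduces to a uniform lower bound, over all admissible starting points $x$, for a single Gaussian transition through time $1/n$ between two fixed $\varepsilon$-balls. For a fixed $x$ at distance at most $a_{j}+\varepsilon$ from $Z_{\alpha(j)}$, I would bound the transition integral below by restricting the endpoint $y$ to a ball of radius $\delta<\varepsilon$ contained in $B(Z_{\alpha(j)},\varepsilon)$ and placed near the point of that ball closest to $x$; on that small ball $\|y-x\|\le a_{j}+2\delta$, and since the transition density at time $1/n$ is $(2\pi/n)^{-d/2}\exp(-\tfrac{n}{2}\|y-x\|^{2})$, the transition probability is at least $c_{d}\,\delta^{d}\,(2\pi/n)^{-d/2}\exp(-\tfrac{n}{2}(a_{j}+2\delta)^{2})$ for a dimensional constant $c_{d}$. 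The essential point is that the $\varepsilon$-thickness of the balls enters only through the $\delta$-ball prefactor, not through the exponent: the effective travel distance is the centre-to-centre distance $a_{j}$, up to $O(\delta)$.

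Taking logarithms, the Gaussian normalisations $(2\pi/n)^{-d/2}$ contribute $+\tfrac{nd}{2}\ln(n/2\pi)\ge -O(n\log n)$ and the factors $c_{d}\delta^{d}$ contribute $-C_{\delta}\,n$ with $C_{\delta}=O(\log(1/\delta))$ finite for each fixed $\delta$, so
\begin{equation*}
\ln p_{n}\ \ge\ -\frac{n}{2}\sum_{j=1}^{n}(a_{j}+2\delta)^{2}\ -\ C_{\delta}'\,n.
\end{equation*}
Since $a_{j}\le M:=\|Z_{\alpha_{j\ast}}-Z_{\alpha_{j\ast-1}}\|$ for $j\ge 2$ while $a_{1}=\|Z_{\alpha(1)}\|$ is a single fixed quantity, one has $\sum_{j=1}^{n}(a_{j}+2\delta)^{2}\le n(M+2\delta)^{2}+O(1)$, whence $\ln p_{n}\ge -\tfrac{n^{2}}{2}(M+2\delta)^{2}-O(n)$. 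Dividing by $n^{2}$ and letting $n\to\infty$ yields $\liminf_{n}n^{-2}\ln p_{n}\ge-(M+2\delta)^{2}/2$; as $\delta\in(0,\varepsilon)$ is arbitrary, letting $\delta\to0$ gives the claimed bound $-M^{2}/2$.

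The main obstacle is the third step: one must verify that neither the $\varepsilon$-width of the target balls nor the Gaussian normalisation contaminates the $n^{-2}$-order limit. This is precisely what forces the uniform-over-source transition estimate together with the small-ball (cap) targeting device, which guarantees that the quadratic cost is governed by the centre distances $a_{j}$ rather than by $a_{j}+2\varepsilon$ and that the $\delta$-corrections carry coefficients tending to $0$. By contrast the remaining bookkeeping --- the inequality $\sum a_{j}^{2}\le nM^{2}+O(1)$ that produces the $n^{2}$ scaling, and the fact that the normalisations are only $O(n\log n)$ --- is routine.
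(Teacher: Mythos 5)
Your proof is correct, and its skeleton coincides with the paper's: restrict to the grid times $t_{j}=j/n$, factor the event $\{\gamma(j/n)\in B(Z_{\alpha(j)},\varepsilon),\ j=1,\dots,n\}$ into $n$ one-step Gaussian transitions, show that each step costs $\exp\{-n a_{j}^{2}/2\}$ up to prefactors whose logarithms total only $O(n\log n)$, and divide by $n^{2}$. Where you genuinely differ is in the device that keeps the $\varepsilon$-width of the targets out of the exponent. The paper stays with independent increments and shrinks the targets to balls of radius $\varepsilon/n$ centered at the increments $Z_{\alpha(k)}-Z_{\alpha(k-1)}$ (a triangle-inequality induction keeps the accumulated error below $\varepsilon$), then bounds every factor below by the single worst factor --- a step that uses radial monotonicity of the Gaussian density, i.e.\ an Anderson-type comparison the paper treats as obvious --- and finally computes the sharp asymptotics of that worst factor by a change of variables and dominated convergence. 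You instead factor via the Markov property with an infimum over sources in the previous ball and aim at a $\delta$-cap inside $B(Z_{\alpha(j)},\varepsilon)$ nearest the source, obtaining the exponent $(a_{j}+2\delta)^{2}$, and you remove $\delta$ only after $n\to\infty$. Your route is leaner: it needs only crude one-step lower bounds (no constant $c_{\varepsilon}$, no comparison between factors), and your separate $O(1)$ treatment of the first step $a_{1}=\|Z_{\alpha(1)}\|$ is slightly more careful than the paper's, whose product tacitly takes the undefined $Z_{\alpha_{0}}$ to be the origin and needs $\|Z_{\alpha(1)}\|$ to be dominated by the maximal increment. What the paper's longer computation buys is a sharp per-step asymptotic, which is more information than the $n^{-2}$-scale bound requires; note also that both your argument and the paper's really establish a $\liminf$ lower bound, which is all the stated inequality can mean.
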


\begin{proof}
Consider points $\frac{1}{n},\frac{2}{n},\cdots,\frac{n}{n}\in[0,1]$, since the random curve $\gamma(r)$ is the trajectory of a standard Brownian motion $\{w(t);t\in[0,1]\}$, then we have the following relation
\begin{equation*}
	\begin{aligned}
		&{P}\{\gamma \ {\rm has} \ {\rm at} \ {\rm least}  \ {\rm type} \ \alpha(1),\cdots,\alpha(n) \ {\rm up} \ {\rm to} \ \varepsilon\}\\
		\geq&{P}\{w(\frac{1}{n})\in{B}(Z_{\alpha_{1}},\varepsilon),\cdots,w(\frac{n}{n})\in{B}(Z_{\alpha_{n}},\varepsilon)\}.
	\end{aligned}
\end{equation*}
If $w(\frac{1}{n})\in{B}(Z_{\alpha_{1}},\frac{\varepsilon}{n})$ it means that
\begin{equation*}
	\begin{aligned}
		\|w(\frac{1}{n})-Z_{\alpha_{1}}\|\leq\frac{\varepsilon}{n}.
	\end{aligned}
\end{equation*}
If also $w(\frac{2}{n})-w(\frac{1}{n})\in{B}(Z_{\alpha_{2}}-Z_{\alpha_{1}},\frac{\varepsilon}{n})$, which implies that
\begin{equation*}
	\begin{aligned}
		\|w(\frac{2}{n})-Z_{\alpha_{2}}\| \leq\frac{2\varepsilon}{n}.
	\end{aligned}
\end{equation*}
By means of this induction process, we can derive the following result,
\begin{equation*}
\begin{aligned}
&{P}\{w(\frac{1}{n})\in{B}(Z_{\alpha_{1}},\varepsilon),\cdots,w(\frac{n}{n})\in{B}(Z_{\alpha_{n}},\varepsilon)\}\\
\geq&{P}\{w(\frac{1}{n})\in{B}(Z_{\alpha_{1}},\frac{\varepsilon}{n}),w(\frac{2}{n})-w(\frac{1}{n})\in{B}(Z_{\alpha_{2}}-Z_{\alpha_{1}},\frac{\varepsilon}{n}),\cdots,\\
&w(\frac{n}{n})-w(\frac{n-1}{n})\in{B}(Z_{\alpha_{n}}-Z_{\alpha_{n-1}},\frac{\varepsilon}{n})\}\\
=&\Pi_{k=1}^{n}{P}\{w(\frac{1}{n})\in{B}(Z_{\alpha_{k}}-Z_{\alpha_{k-1}},\frac{\varepsilon}{n})\}.
\end{aligned}
\end{equation*}
Denote the density of $w(\frac{1}{n})$ by $p$, and $p$ is decreasing in when $\|x\|$ increases, so we have
\begin{equation}\label{Th2.1.1}
	\begin{aligned}
		&\Pi_{k=1}^{n}{P}\{w(\frac{1}{n})\in{B}(Z_{\alpha_{k}}-Z_{\alpha_{k-1}},\frac{\varepsilon}{n})\}\\
		=&\Pi_{k=1}^{n}\int_{{B}(Z_{\alpha_{k}}-Z_{\alpha_{k-1}},\frac{\varepsilon}{n})}p(x)dx\\
		\geq&\big{(}\int_{{B}(Z_{\alpha_{j\ast}}-Z_{\alpha_{j\ast-1}},\frac{\varepsilon}{n})}p(x)dx\big{)}^{n}\\
		=&P^{n}\{w(\frac{1}{n})\in{B}(Z_{\alpha_{j\ast}}-Z_{\alpha_{j\ast-1}},\frac{\varepsilon}{n})\}\\
		=&P^{n}\{w(1)\in{B}(\sqrt{n}Z_{\alpha_{j\ast}}-\sqrt{n}Z_{\alpha_{j\ast-1}},\frac{\varepsilon}{\sqrt{n}})\}.
	\end{aligned}
\end{equation}
Next, we consider the probability
\begin{equation}\label{Th2.1.3}
	\begin{aligned}
		&P\{w(1)\in{B}(\sqrt{n}Z_{\alpha_{j\ast}}-\sqrt{n}Z_{\alpha_{j\ast-1}},\frac{\varepsilon}{\sqrt{n}})\}\\
		=&\int_{{B}(\sqrt{n}Z_{\alpha_{j\ast}}-\sqrt{n}Z_{\alpha_{j\ast-1}},\frac{\varepsilon}{\sqrt{n}})}(\frac{1}{2\pi})^{\frac{d}{2}} \exp\{-\frac{\|x\|^{2}}{2}\}dx,
		\end{aligned}
	\end{equation}
by the change of variable $y=\frac{\sqrt{n}}{\varepsilon}(x-\sqrt{n}Z_{\alpha_{j\ast}}+\sqrt{n}Z_{\alpha_{j\ast-1}})$, we rewrite (\ref{Th2.1.3}) as follows:
\begin{equation*}
	\begin{aligned}
		=&\int_{{B}(0,1)}(\frac{\varepsilon}{\sqrt{n}})^{d}(\frac{1}{2\pi})^{\frac{d}{2}} \exp\{-\frac{\|\frac{y\varepsilon}{\sqrt{n}}+\sqrt{n}Z_{\alpha_{j\ast}}-\sqrt{n}Z_{\alpha_{j\ast-1}}\|^{2}}{2}\}dy.
		\end{aligned}
\end{equation*}
Next, by the relation
\begin{equation*}
	\begin{aligned}
		&\frac{(\frac{\varepsilon}{\sqrt{n}})^{d}(\frac{1}{2\pi})^{\frac{d}{2}} \exp\{-\frac{\|\frac{y\varepsilon}{\sqrt{n}}+\sqrt{n}Z_{\alpha_{j\ast}}-\sqrt{n}Z_{\alpha_{j\ast-1}}\|^{2}}{2}\}}{(\frac{1}{2\pi})^{\frac{d}{2}}\exp\{-\frac{n\|Z_{\alpha_{j\ast}}-Z_{\alpha_{j\ast-1}}\|^{2}}{2}\} {\rm vol}({B}(\sqrt{n}Z_{\alpha_{j\ast}}-\sqrt{n}Z_{\alpha_{j\ast-1}},\frac{\varepsilon}{\sqrt{n}}))}\\
		=&\frac{\exp\{\frac{n\|Z_{\alpha_{j\ast}}-Z_{\alpha_{j\ast-1}}\|^{2}-\|\frac{y\varepsilon}{\sqrt{n}}+\sqrt{n}Z_{\alpha_{j\ast}}-\sqrt{n}Z_{\alpha_{j\ast-1}}\|^{2}}{2}\}}{{\rm vol}({B}(0,1))}\\
		\leq& \frac{\exp\{\|\frac{y\varepsilon}{\sqrt{n}}\|^{2}\}}{{\rm vol}({B}(0,1))}\leq\frac{e^{\varepsilon^{2}}}{{\rm vol}({B}(0,1))},
	\end{aligned}
\end{equation*}
take $\frac{e^{\varepsilon^{2}}}{{\rm vol}({B}(0,1))} $ as the dominating function, then we can apply the dominated convergence theorem
\begin{equation*}
	\begin{aligned}
		&\lim_{n\rightarrow+\infty}\int_{{B}(0,1)}\frac{(\frac{\varepsilon}{\sqrt{n}})^{d}(\frac{1}{2\pi})^{\frac{d}{2}} \exp\{-\frac{\|\frac{y\varepsilon}{\sqrt{n}}+\sqrt{n}Z_{\alpha_{j\ast}}-\sqrt{n}Z_{\alpha_{j\ast-1}}\|^{2}}{2}\}}{(\frac{1}{2\pi})^{\frac{d}{2}}\exp\{-\frac{n\|Z_{\alpha_{j\ast}}-Z_{\alpha_{j\ast-1}}\|^{2}}{2}\} {\rm vol}({B}(\sqrt{n}Z_{\alpha_{j\ast}}-\sqrt{n}Z_{\alpha_{j\ast-1}},\frac{\varepsilon}{\sqrt{n}}))}dy\\
		=&\int_{\|y\| \leq 1}\frac{\sum_{i=1}^{d}2\varepsilon z_{i}y_{i}}{{\rm vol}({B}(0,1))}dy_{1}\cdots dy_{n}=c_{\varepsilon},
	\end{aligned}
\end{equation*}
where $Z_{\alpha_{j\ast}}-Z_{\alpha_{j\ast-1}}=(z_{1},\cdots,z_{d})$, $y=(y_{1},\cdots,y_{d})$. Thus we have the following relation
\begin{equation}\label{Th2.1.2}
	\begin{aligned}
		&P\{w(1)\in{B}(\sqrt{n}Z_{\alpha_{j\ast}}-\sqrt{n}Z_{\alpha_{j\ast-1}},\frac{\varepsilon}{\sqrt{n}})\}\\
		\sim&c_{\varepsilon}(\frac{1}{2\pi})^{\frac{d}{2}}\exp\{-\frac{n\|Z_{\alpha_{j\ast}}-Z_{\alpha_{j\ast-1}}\|^{2}}{2}\} {\rm vol}({B}(\sqrt{n}Z_{\alpha_{j\ast}}-\sqrt{n}Z_{\alpha_{j\ast-1}},\frac{\varepsilon}{\sqrt{n}})), \ n\rightarrow+\infty.
	\end{aligned}
\end{equation}
Denote $C_{\varepsilon}=\frac{c_{\varepsilon}\varepsilon^{d}}{2^{d/2}\Gamma(\frac{d}{2}+1)}$, we can get that, there exists a $n_{0}$, for any $n>n_{0}$
\begin{equation*}
	\begin{aligned}
&{P}\{\gamma \ {\rm has} \ {\rm at} \ {\rm least}  \ {\rm type} \ \alpha(1),\cdots,\alpha(n) \ {\rm up} \ {\rm to} \ \varepsilon\}\\
\geq& P^{n}\{w(1)\in{B}(\sqrt{n}Z_{\alpha_{j\ast}}-\sqrt{n}Z_{\alpha_{j\ast-1}},\frac{\varepsilon}{\sqrt{n}})\}\\
\geq&{\bigg(}\frac{C_{\varepsilon}}{n^{d/2}}\exp\{-\frac{n\|Z_{\alpha_{j\ast}}-Z_{\alpha_{j\ast-1}}\|^{2}}{2}\}{\bigg)}^{n}.
\end{aligned}
\end{equation*}
Then we take logarithm for both side,
\begin{equation*}
	\begin{aligned}
		&n^{-1} \ln P\{\gamma \ {\rm has} \ {\rm at} \ {\rm least}  \ {\rm type} \ \alpha(1),\cdots,\alpha(n) \ {\rm up} \ {\rm to} \ \varepsilon\}\\
		\geq & \ln C_{\varepsilon}+\ln n^{-d/2}-\frac{n\|Z_{\alpha_{j\ast}}-Z_{\alpha_{j\ast-1}}\|^{2}}{2}.
	\end{aligned}
\end{equation*}
Finally, we can get
\begin{equation*}
	\begin{aligned}
		&\lim_{n\rightarrow+\infty}n^{-2}\ln{P}\{\gamma \ {\rm has} \ {\rm at} \ {\rm least}  \ {\rm type} \ \alpha(1),\cdots,\alpha(n) \ {\rm up} \ {\rm to} \ \varepsilon\}\\
		\geq&-\frac{\|Z_{\alpha_{j\ast}}-Z_{\alpha_{j\ast-1}}\|}{2}.
	\end{aligned}
\end{equation*}
\end{proof}

Next, we consider the asymptotic behavior of probability that polygonal line can be almost inscribed into Brownian trajectory. 

\begin{theorem}
	Let $\{W(t);t\in[0,1]\}$ be a Brownian motion in dimension $d\geq3$. Given that $x$ is the starting point of the Brownian motion, let $z_{1},z_{2},\cdots,z_{n}\neq{x}$ be distinct fixed points such that $z_{k}\neq{z_{j}}$ for $k\neq{j}$ then
	\begin{equation*}
		\begin{aligned}
			&P_{x}\{\exists \ 0\leq{t}_{1}<\cdots<t_{n}\leq{1}:W(t_{1})\in{B(z_{1},\varepsilon)},W(t_{2})\in{B(z_{2},\varepsilon)},\cdots,W(t_{n})\in{B(z_{n},\varepsilon)}\}\\
			\asymp&{\|x-z_{1}\|}^{2-d}{\|z_{1}-z_{2}\|}^{2-d}\cdots{\|z_{n-1}-z_{n}\|}^{2-d}(\varepsilon^{d-2})^{n}, \quad \varepsilon\rightarrow0_{+}.
		\end{aligned}
	\end{equation*}
\end{theorem}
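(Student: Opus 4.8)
The plan is to rewrite the event through successive hitting times and then use the strong Markov property to factorize the probability into single-ball hitting probabilities, exploiting the transience of Brownian motion for $d\geq3$. Writing $z_0=x$ and $T_0=0$, I would define recursively $T_k=\inf\{t>T_{k-1}:W(t)\in B(z_k,\varepsilon)\}$. Since $z_1,\dots,z_n$ are distinct, the balls $B(z_k,\varepsilon)$ are pairwise disjoint for all small $\varepsilon$, and the event in question is exactly $\{T_n\leq1\}$.

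For the upper bound I would discard the time constraint, using $P_x\{T_n\leq1\}\leq P_x\{T_n<\infty\}$, and apply the strong Markov property successively at $T_{n-1},T_{n-2},\dots$ to obtain
\[
P_x\{T_n<\infty\}\leq\prod_{k=1}^{n}\ \sup_{y\in\partial B(z_{k-1},\varepsilon)}P_y\{\sigma_{B(z_k,\varepsilon)}<\infty\},
\]
where for $k=1$ the factor is the exact value at the starting point $x$. Because $\|\cdot\|^{2-d}$ is harmonic for $d\geq3$, a Brownian motion started at distance $r>\varepsilon$ from the center hits $B(z_k,\varepsilon)$ with probability exactly $(\varepsilon/r)^{d-2}$; taking $y\in\partial B(z_{k-1},\varepsilon)$ gives $r\geq\|z_{k-1}-z_k\|-\varepsilon$, so each factor equals $\|z_{k-1}-z_k\|^{2-d}\varepsilon^{d-2}(1+o(1))$ as $\varepsilon\to0_+$. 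The product then furnishes the upper half of the claimed relation.

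For the lower bound I would restrict to paths that finish each leg of the journey quickly: set $A_k=\{T_k-T_{k-1}\leq1/n\}$, so that $\bigcap_{k=1}^{n}A_k\subseteq\{T_n\leq1\}$. The same strong Markov factorization, bounded from below this time, gives
\[
P_x\{T_n\leq1\}\geq\prod_{k=1}^{n}\ \inf_{y\in\partial B(z_{k-1},\varepsilon)}P_y\{\sigma_{B(z_k,\varepsilon)}\leq1/n\},
\]
again with the $k=1$ factor evaluated at the exact starting point $x$. Everything then reduces to the key estimate that, for fixed $z'\neq z$ and fixed $s>0$, one has $P_y\{\sigma_{B(z',\varepsilon)}\leq s\}\asymp\varepsilon^{d-2}$ uniformly for $y$ at a fixed positive distance from $z'$; that is, hitting a small ball within a fixed time window is of the same order $\varepsilon^{d-2}$ as ever hitting it. I would prove this by a second-moment (Paley--Zygmund) argument applied to the occupation time $L_\varepsilon=\int_0^s\mathbf{1}_{B(z',\varepsilon)}(W_t)\,dt$. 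Since $\{\sigma_{B(z',\varepsilon)}\leq s\}=\{L_\varepsilon>0\}$ up to a null set, one has $P_y\{\sigma_{B(z',\varepsilon)}\leq s\}\geq(\E_yL_\varepsilon)^2/\E_yL_\varepsilon^2$. From the heat kernel, $\E_yL_\varepsilon=\int_{B(z',\varepsilon)}\int_0^sp_t(y,w)\,dt\,dw$ is bounded below by a constant multiple of $\varepsilon^d$, while $\E_yL_\varepsilon^2$ is bounded above by a constant multiple of $\varepsilon^{d+2}$, the extra factor $\varepsilon^2$ arising from $\int_{B(z',\varepsilon)}\|w_1-w_2\|^{2-d}\,dw_2\asymp\varepsilon^2$; dividing yields the order $\varepsilon^{d-2}$, with constants uniform over the relevant $y$.

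The main obstacle I expect is exactly this finite-horizon lower estimate. The upper bound rests on an exact harmonic-function identity, but matching its order from below forces one to show that the restriction $\{\sigma_{B(z',\varepsilon)}\leq s\}$ costs only a constant factor as $\varepsilon\to0_+$; this is the role of the second-moment computation, whose delicate points are the bound $\E_yL_\varepsilon^2=O(\varepsilon^{d+2})$ (so that the self-overlap of the occupation measure, rather than the square of its mean, controls the size) and the uniformity of all constants over the starting points $y\in\partial B(z_{k-1},\varepsilon)$. Collecting the two bounds, and absorbing the fixed positive constants (the truncated Green functions $\int_0^{1/n}p_t(z_{k-1},z_k)\,dt$ and the universal harmonic constants) into the comparison, yields $P_x\{\cdots\}\asymp\prod_{k=1}^{n}\|z_{k-1}-z_k\|^{2-d}(\varepsilon^{d-2})^n$ with $z_0=x$, as claimed.
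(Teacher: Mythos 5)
Your proposal is correct, and while its overall skeleton (reduce to successive single-ball hitting estimates via stopping times and the strong Markov property) matches the paper's, your treatment of the crucial single-ball finite-horizon estimate is genuinely different. The paper rescales space--time so that the question becomes hitting the unit ball during $[0,1/\varepsilon^{2}]$; its upper bound, like yours, comes from the infinite-horizon hitting probability (you invoke the exact harmonic identity $(\varepsilon/r)^{d-2}$, the paper cites the known asymptotics), but its lower bound is obtained by subtracting from the infinite-horizon probability the probability of a first hit \emph{after} time $1/\varepsilon^{2}$, which the authors control through the last-exit decomposition and an explicit Green-function/Gaussian computation, the point being that this ``late hit'' term carries a strictly smaller constant in front of $\|x-z\|^{2-d}\varepsilon^{d-2}$. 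You instead split the time interval into $n$ legs of length $1/n$ and prove the finite-horizon lower bound $P_{y}\{\sigma_{B(z',\varepsilon)}\leq s\}\geq c\,\varepsilon^{d-2}$ by a second-moment (Paley--Zygmund) argument on the occupation time, using $\E_{y}L_{\varepsilon}\geq c\,\varepsilon^{d}$ and $\E_{y}L_{\varepsilon}^{2}\leq C\,\varepsilon^{d+2}$; both moment bounds are correct as you computed them (the second via the Green-function self-overlap $\int_{B(z',\varepsilon)}\|w_{1}-w_{2}\|^{2-d}dw_{2}\asymp\varepsilon^{2}$). Your route is more self-contained (no last-exit formula, no cited asymptotics beyond harmonicity) and yields constants that are manifestly uniform over starting points $y\in\partial B(z_{k-1},\varepsilon)$, which makes the induction over the $n$ balls cleaner than the paper's informal step of ``replacing $W(\tau_{1})$ by $z_{1}$''; the price is that you only obtain order-of-magnitude constants, whereas the paper's computation retains sharper constant information in the one-ball estimate. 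Two standard details you should still record: the identification $\{\sigma_{B(z',\varepsilon)}\leq s\}=\{L_{\varepsilon}>0\}$ up to null sets uses the regularity of boundary points of a ball (cone condition), and the composition of the events $A_{k}$ requires applying the strong Markov property at $T_{k-1}$ together with the fact that $W(T_{k-1})\in\partial B(z_{k-1},\varepsilon)$ and that the balls are pairwise disjoint for small $\varepsilon$.
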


\begin{proof}
	First consider the simple case
	\begin{equation}\label{Tm2.2.5}
		\begin{aligned} P_{x}\{\exists{t}\in[0,1]:W(t)\in{B(z,\varepsilon)}\}.
	\end{aligned}
\end{equation}
	To deal with this probability, we have
	\begin{equation*}
		\begin{aligned}
			P_{x}\{\exists \ {t}\in[0,1]:W(t)\in{B(z,\varepsilon)}\}
			=&P_{\frac{x}{\varepsilon}}\{\exists \ {t}\in[0,1]:\frac{W(t)}{\varepsilon}\in{B(\frac{z}{\varepsilon},1)}\}\\
			=&P_{\frac{x}{\varepsilon}-\frac{z}{\varepsilon}}\{\exists{t}\in[0,1]:\frac{W(t)}{\varepsilon}\in{B(0,1)}\}.
		\end{aligned}
	\end{equation*}
	By the scaling property
	$$\frac{W(t)}{\varepsilon}\overset{d}{=}W(\frac{t}{\varepsilon^{2}}), \quad t\geq0,$$
	we can get
	\begin{equation*}
		\begin{aligned}
			P_{\frac{x}{\varepsilon}-\frac{z}{\varepsilon}}\{\exists{t}\in[0,1]:\frac{W(t)}{\varepsilon}\in{B(0,1)}\}=P_{\frac{x}{\varepsilon}-\frac{z}{\varepsilon}}\{\exists{t}\in[0,\frac{1}{\varepsilon^{2}}]:W(t)\in{B(0,1)}\}.
		\end{aligned}
	\end{equation*}
	Consider these events
	\begin{equation*}
		\begin{aligned}
			A_{1}&=\{\exists{t}\geq{0}:W(t)\in{B(0,1)}\},\\
			A_{2}&=\{\exists{t}\in[\frac{1}{\varepsilon^{2}},+\infty):W(t)\in{B(0,1)}\},\\
			A_{3}&=\{\exists{t}\in[0,\frac{1}{\varepsilon^{2}}]:W(t)\in{B(0,1)}\},
		\end{aligned}
	\end{equation*}
	because $A_{1}=A_{2}\cup{A_{3}}$, and $P\{A_{2}\cup{A_{3}}\}\leq{P\{A_{2}\}}+P\{A_{3}\}$. So
	\begin{equation}\label{Tm2.2.1}
		\begin{aligned}
			{P}_{\frac{x}{\varepsilon}-\frac{z}{\varepsilon}}\{\exists{t}\geq{0}:W(t)\in{B(0,1)}\}
			\geq{P}_{\frac{x}{\varepsilon}-\frac{z}{\varepsilon}}\{\exists{t}\in[0,\frac{1}{\varepsilon^{2}}]:W(t)\in{B(0,1)}\},
	\end{aligned}
\end{equation}
and
\begin{equation}\label{Tm2.2.2}
	\begin{aligned}
		&{P}_{\frac{x}{\varepsilon}-\frac{z}{\varepsilon}}\{\exists{t}\in[0,\frac{1}{\varepsilon^{2}}]:W(t)\in{B(0,1)}\}\\
		\geq&{P}_{\frac{x}{\varepsilon}-\frac{z}{\varepsilon}}\{\exists{t}\geq{0}:W(t)\in{B(0,1)}\}-P_{\frac{x}{\varepsilon}-\frac{z}{\varepsilon}}\{\exists{t}\in[\frac{1}{\varepsilon^{2}},+\infty]:W(t)\in{B(0,1)}\}.
		\end{aligned}
	\end{equation}
	By the known result in \cite{22}
	\begin{equation*}
		\begin{aligned}
			{P}_{\frac{x}{\varepsilon}-\frac{z}{\varepsilon}}\{\exists{t}\geq{0}:W(t)\in{B(0,1)}\}\sim\frac{\Gamma(d/2-1)}{2\pi^{d/2}}{\|x-z\|}^{2-d}\varepsilon^{d-2}, \quad \varepsilon\rightarrow0_{+}.
		\end{aligned}
	\end{equation*}
	Then, from (\ref{Tm2.2.1}), there exists a $\varepsilon_{0}$, for any $\varepsilon<\varepsilon_{0}$,
	\begin{equation}\label{3.1}
		\begin{aligned}
			{P}_{\frac{x}{\varepsilon}-\frac{z}{\varepsilon}}\{\exists{t}\in[0,\frac{1}{\varepsilon^{2}}]:W(t)\in{B(0,1)}\}\leq{C(d)}{\|x-z\|}^{2-d}\varepsilon^{d-2},
		\end{aligned}
	\end{equation}
    which shows the upper bound of the probability (\ref{Tm2.2.5}). Next, we check its lower bound, 
	\begin{equation*}
		\begin{aligned}
			&P_{\frac{x}{\varepsilon}-\frac{z}{\varepsilon}}\{\exists{t}\in[\frac{1}{\varepsilon^{2}},+\infty]:W(t)\in{B(0,1)}\}\\
			=&P\{\exists{t}\in[\frac{1}{\varepsilon^{2}},+\infty]:\frac{x}{\varepsilon}-\frac{z}{\varepsilon}+W(t)\in{B(0,1)}\}\\
			=&P\{\exists{s}\geq{0}:\frac{x}{\varepsilon}-\frac{z}{\varepsilon}+W(\frac{1}{\varepsilon^{2}})+[W(s+\frac{1}{\varepsilon^{2}})-W(\frac{1}{\varepsilon^{2}})]\in{B}(0,1)\}.
		\end{aligned}
	\end{equation*}
	By the Markov property, $\{W(s); s\geq{0}\}$ is a Brownian motion, then the process $\{W(s+\frac{1}{\varepsilon^{2}})-W(\frac{1}{\varepsilon^{2}}); s\geq{0}\}$ is again a Brownian motion started from the origin. Denote $\xi=\frac{x}{\varepsilon}-\frac{z}{\varepsilon}+W(\frac{1}{\varepsilon^{2}})$, $p_{\xi}(u)$ is the density of random variable $\xi$, then
	\begin{equation*}
		\begin{aligned}
			&P\{\exists{s}\geq{0}:\frac{x}{\varepsilon}-\frac{z}{\varepsilon}+W(\frac{1}{\varepsilon^{2}})+[W(s+\frac{1}{\varepsilon^{2}})-W(\frac{1}{\varepsilon^{2}})]\in{B}(0,1)\}\\
			=&P\{\exists{t\geq{0}}:\xi+W(t)\in{B(0,1)}\}\\
			=&\int_{\R^{d}}p_{\xi}(u)P\{\exists{t\geq{0}}:u+W(t)\in{B(0,1)}\}du.
		\end{aligned}
	\end{equation*}
	By the last exit formula,
	\begin{equation*}
		\begin{aligned}
			&\int_{\R^{d}}p_{\xi}(u)P\{\exists{t\geq{0}}:u+W(t)\in{B(0,1)}\}du\\
			=&\int_{\R^{d}}p_{\xi}(u)du\int_{S(0,1)}G(u,v)\delta(dv)\\
			=&\int_{S(0,1)}\delta(dv)\int_{\R^{d}}p_{\xi}(u)G(u,v)du,
		\end{aligned}
	\end{equation*}
		where $\delta$ is the equilibrium measure \cite{22} (i.e. the uniform distribution on $S(0,1)$), $G(u,v)$ is the Green's function,
		$$G(u,v)=\int_{0}^{+\infty}p(t,u,v)dt,$$
		where $p(t,u,v)$ is the transition density of $W$. Then
	\begin{equation*}
		\begin{aligned}
			\int_{\R^{d}}p_{\xi}(u)G(u,v)du
			=&\int_{\R^{d}}p_{\xi}(u)du\int_{0}^{+\infty}p(t,u,v)dt\\
			=&\int_{0}^{+\infty}dt\int_{\R^{d}}p_{\xi}(u)p(t,u,v)du.
		\end{aligned}
	\end{equation*}
Since the second integral is a convolution, we define a new random variable $\beta_{t}\sim{N(\frac{x}{\varepsilon}-\frac{z}{\varepsilon},t+\frac{1}{\varepsilon^{2}})}$, then
	\begin{equation*}
		\begin{aligned}
			\int_{0}^{+\infty}dt\int_{\R^{d}}p_{\xi}(u)p(t,u,v)du
			=&\int_{0}^{+\infty}p_{\beta_{t}}(v)dt\\
			=&\int_{\frac{1}{\varepsilon^{2}}}^{+\infty}\frac{1}{(\sqrt{2\pi{t}})^{d}}e^{-\frac{\|v-\frac{x}{\varepsilon}+\frac{z}{\varepsilon}\|^{2}}{2t}}dt.
		\end{aligned}
	\end{equation*}
	By the change of variable $\frac{\|v-\frac{x}{\varepsilon}+\frac{z}{\varepsilon}\|^{2}}{2t}=\frac{1}{s}$, we have
	\begin{equation*}
		\begin{aligned}
			\int_{\frac{1}{\varepsilon^{2}}}^{+\infty}\frac{1}{(\sqrt{2\pi{t}})^{d}}e^{-\frac{\|v-\frac{x}{\varepsilon}+\frac{z}{\varepsilon}\|^{2}}{2t}}dt=\int_{\frac{2}{\|v\varepsilon-x+z\|^{2}}}^{+\infty}\frac{\|v-\frac{x}{\varepsilon}+\frac{z}{\varepsilon}\|^{2-d}}{2(\sqrt{\pi{s}})^{d}}e^{-\frac{1}{s}}ds,
		\end{aligned}
	\end{equation*}
	here $\|v-\frac{x}{\varepsilon}+\frac{z}{\varepsilon}\|^{2-d}={\|x-z-\varepsilon{v}\|}^{2-d}\varepsilon^{d-2}$. Since $\varepsilon$ is sufficiently small, we ignore $\varepsilon{v}$ here. From (\ref{Tm2.2.2}) we have,
	\begin{equation}\label{3.2}
		\begin{aligned}
			&{P}_{\frac{x}{\varepsilon}-\frac{z}{\varepsilon}}\{\exists{t}\in[0,\frac{1}{\varepsilon^{2}}]:W(t)\in{B(0,1)}\}\\
			\geq&{P}_{\frac{x}{\varepsilon}-\frac{z}{\varepsilon}}\{\exists{t}\geq{0}:W(t)\in{B(0,1)}\}-P_{\frac{x}{\varepsilon}-\frac{z}{\varepsilon}}\{\exists{t}\in[\frac{1}{\varepsilon^{2}},+\infty]:W(t)\in{B(0,1)}\}\\
			\geq&\big{(}\frac{\Gamma(d/2-1)}{2\pi^{d/2}}-\int_{\frac{2}{\|z-x\|^{2}}}^{+\infty}\frac{1}{2(\sqrt{\pi{s}})^{d}}e^{-\frac{1}{s}}ds\big{)}{\|x-z\|}^{2-d}\varepsilon^{d-2}\\
			=&\int_{0}^{\frac{2}{\|z-x\|^{2}}}\frac{1}{2(\sqrt{\pi{s}})^{d}}e^{-\frac{1}{s}}ds{\|x-z\|}^{2-d}\varepsilon^{d-2}\\
			=&{c}{\|x-z\|}^{2-d}\varepsilon^{d-2}, \quad \varepsilon\rightarrow0_{+}.
		\end{aligned}
	\end{equation}
	Combine (\ref{3.1}) and (\ref{3.2}), we can get the asymptotic result
	\begin{equation}\label{Tm2.2.6}
		\begin{aligned}
			P_{x}\{\exists{t}\in[0,1]:W(t)\in{B(z,\varepsilon)}\}\asymp{\|x-z\|}^{2-d}\varepsilon^{d-2}.
		\end{aligned}
	\end{equation}
	Consider the case of $n=2$, estimate the following probability
	$$P_{x}\{\exists0\leq{t}_{1}<t_{2}\leq{1}:W(t_{1})\in{B(z_{1},\varepsilon)},W(t_{2})\in{B(z_{2},\varepsilon)}\}.$$
	Denote the stopping times $$\tau_{1}=\min\{{0\leq{t_{1}}\leq1:W(t_{1})\in{B}(z_{1},\varepsilon)}\},$$ $$\tau_{2}=\min\{{\tau_{1}<{t_{2}}\leq1:W(t_{2})\in{B}(z_{2},\varepsilon)}\},$$
	then we can rewrite the probability
	\begin{equation*}
		\begin{aligned}
			&P_{x}\{\exists 0\leq{t}_{1}<t_{2}\leq{1}:W(t_{1})\in{B(z_{1},\varepsilon)},W(t_{2})\in{B(z_{2},\varepsilon)}\}\\
			=&\mathbb{E}(\mathbf{1}_{0\leq{\tau}_{1}<{\tau}_{2}\leq1})\\
			=&\mathbb{E}_{x}(\mathbf{1}_{0\leq{\tau}_{1}<1})\mathbb{E}_{W(\tau_{1})}(\mathbf{1}_{0<{\tau}_{2}\leq{1-\tau_{1}}}).
		\end{aligned}
	\end{equation*}
	The asymptotic result of $\mathbb{E}_{x}(\mathbf{1}_{0\leq{\tau}_{1}<1})$ is exactly (\ref{Tm2.2.6}). Since we consider $\varepsilon\rightarrow{0}$, here we replace $W(\tau_{1})$ by $z_{1}$. By the result (\ref{3.1}), we can get
	\begin{equation}\label{3.3}
		\begin{aligned}
			\mathbb{E}_{z_{1}}(\mathbf{1}_{0<{\tau}_{2}\leq{1-\tau_{1}}})\leq{c_{2}}{\|z_{1}-z_{2}\|}^{2-d}\varepsilon^{d-2}.
		\end{aligned}
	\end{equation}
	From (\ref{3.2}), we have
	\begin{equation}\label{3.4}
		\begin{aligned}
			\mathbb{E}_{z_{1}}(\mathbf{1}_{0<{\tau}_{2}\leq{1-\tau_{1}}})
			\geq&\int_{0}^{\frac{2-2\tau_{1}}{\|z-x\|^{2}}}\frac{1}{2(\sqrt{\pi{s}})^{d}}e^{-\frac{1}{s}}ds{\|z_{1}-z_{2}\|}^{2-d}\varepsilon^{d-2}\\
			\geq&{c_{3}}{\|z_{1}-z_{2}\|}^{2-d}\varepsilon^{d-2}.
		\end{aligned}
	\end{equation}
	From this, we can get the asymptotic result
	\begin{equation*}
		\begin{aligned}
			&P_{x}\{0\leq{t}_{1}<t_{2}\leq{1}:W(t_{1})\in{B(z_{1},\varepsilon)},W(t_{2})\in{B(z_{2},\varepsilon)}\}\\
			\asymp&{\|x-z_{1}\|}^{2-d}{\|z_{1}-z_{2}\|}^{2-d}(\varepsilon^{d-2})^{2}.
		\end{aligned}
	\end{equation*}
By means of this induction process, we can derive the following result,
	\begin{equation*}
		\begin{aligned}
			&P_{x}\{0\leq{t}_{1}<\cdots<t_{n}\leq{1}:B(t_{1})\in{B(z_{1},\varepsilon)},\cdots,B(t_{n})\in{B(z_{n},\varepsilon)}\}\\
			\asymp&{\|x-z_{1}\|}^{2-d}{\|z_{1}-z_{2}\|}^{2-d}\cdots{\|z_{n-1}-z_{n}\|}^{2-d}(\varepsilon^{d-2})^{n}.
		\end{aligned}
	\end{equation*}
	Theorem is proved.
\end{proof}

In \cite{12} , A. A. Dorogovtsev, O. Izyumtseva, and N. Salhi show that if an integrator is represented by an invertible operator, then its self-intersection local time must exist. Subsequently, we investigate the asymptotic behavior of multiple self-intersections under the condition that the polygonal line inscribed in the Brownian trajectory. First, we introduce the definition of the integrator.
\begin{definition}\cite{Dor06}
	A centered Gaussian process $\{\eta(t),t\in[0,1]\}$ is called the integrator if there exists such a positive constant $C>0$ that for arbitrary $n\geq1$, partition $0=t_{0}<\cdots<t_{n}=1$ and real numbers $a_{0},\cdots,a_{n-1}$,
	\begin{equation*}
		\begin{aligned}
			\E(\sum_{k=0}^{n-1}a_{k}(\eta(t_{k+1})-\eta(t_{k})))^{2}\leq C\sum_{k=0}^{n-1}a_{k}^{2}\Delta t_{k}^{2},
		\end{aligned}
	\end{equation*}
	where $\Delta t_{k}:=t_{k+1}-t_{k}$.
\end{definition}

Next, we introduce a lemma that gives a representation of Brownian motion as being divided into two independent integrators.

\begin{lemma}\label{lemma2.1}
Let $\{w(t);t\in[0,1]\}$ be a Brownian motion. Fix $0<s_{1}<\cdots<s_{n}<1$, then there exists a representation
\begin{equation*}
\begin{aligned}
w(t)=y(t)+x(t),
\end{aligned}
\end{equation*}
where $x(t)$ is the polygonal line which connects the vertices $0,w(s_{1}),\cdots,w(s_{n})$. Here $x(t)$, $y(t)$ are both integrators and they are independent of each other.
\end{lemma}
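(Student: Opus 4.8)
The plan is to build the whole decomposition from a single orthogonal projection inside the Gaussian space generated by $w$, and then to read off both the independence and the integrator bounds from that one construction. I would set $x(t):=\E[w(t)\mid w(s_{1}),\dots,w(s_{n})]$ and $y(t):=w(t)-x(t)$. Since $\{w(t):t\in[0,1]\}$ is a centred Gaussian family, this conditional expectation is exactly the $L^{2}$-orthogonal projection of $w(t)$ onto the finite dimensional subspace $H_{S}:=\mathrm{span}\{w(s_{1}),\dots,w(s_{n})\}$, so $x(t)=\sum_{i}c_{i}(t)\,w(s_{i})$ is the best linear predictor. By the Markov property only the two nodes bracketing $t$ contribute, and the Brownian bridge conditional mean on $[s_{i},s_{i+1}]$ is affine in $t$; together with $x(0)=0$ and $x(t)=w(s_{n})$ for $t\ge s_{n}$ this identifies $x$ with the polygonal line through $0,w(s_{1}),\dots,w(s_{n})$ (continued as a constant on $[s_{n},1]$). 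In particular $x(s_{i})=w(s_{i})$, so $y$ is pinned to $0$ at every node and is, piecewise, a Brownian bridge on each $[s_{i},s_{i+1}]$ and a restarted Brownian motion on $[s_{n},1]$.

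For the independence I would argue entirely at the level of the Gaussian Hilbert space. Writing $P$ for the orthogonal projection onto $H_{S}$, we have $x(t)=Pw(t)\in H_{S}$ and $y(t)=(I-P)w(t)\in H_{S}^{\perp}$ for every $t$, whence $\mathrm{Cov}(x(s),y(t))=\langle Pw(s),(I-P)w(t)\rangle=0$ for all $s,t$. Any finite vector $(x(s_{1}'),\dots,x(s_{m}'),y(t_{1}'),\dots,y(t_{k}'))$ is then jointly Gaussian with vanishing cross covariances, so its two blocks are independent; equivalently, orthogonal closed subspaces of a Gaussian space generate independent $\sigma$-algebras. Hence the processes $x$ and $y$ are independent.

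For the integrator property I would reuse this orthogonality instead of computing the increment covariances of $x$ and $y$ by hand. Fix a partition $0=t_{0}<\cdots<t_{m}=1$ and reals $a_{0},\dots,a_{m-1}$, and abbreviate $I(\eta):=\sum_{k}a_{k}\bigl(\eta(t_{k+1})-\eta(t_{k})\bigr)$. Because $x$ and $y$ are independent and centred, $\E\,I(x)I(y)=\E\,I(x)\,\E\,I(y)=0$, so $\E\,I(w)^{2}=\E\,I(x)^{2}+\E\,I(y)^{2}$. Standard Brownian motion is itself an integrator, so $\E\,I(w)^{2}$ is dominated by the right hand side of the defining inequality for $w$; as the two summands are non negative, the very same bound holds for each of $\E\,I(x)^{2}$ and $\E\,I(y)^{2}$ simultaneously. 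Thus $x$ and $y$ satisfy the integrator inequality with the same constant as $w$, which closes the argument.

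The genuinely delicate steps are the two structural identifications rather than any estimate. The first is verifying that the projection $\E[w(t)\mid w(s_{1}),\dots,w(s_{n})]$ really is the geometric polygonal line, which rests on combining the Markov property with the affine form of the bridge mean and on handling the terminal block $[s_{n},1]$ correctly. The second, which I expect to be the main obstacle should one insist on a direct verification of the integrator bound for $y$, is controlling $\E\,I(y)^{2}$ for partitions whose subintervals straddle a node $s_{i}$, where the bridge pinning forces negative correlations between neighbouring increments; the orthogonal splitting above is precisely what allows me to avoid that computation.
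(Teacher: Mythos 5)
Your proof is correct, but it takes a genuinely different route from the paper's. The paper defines $x$ explicitly by the piecewise-linear formulas, proves the integrator inequality for $x$ by a direct variance computation (refining the partition at the nodes $s_{k}$, using independence of the Brownian increments across edges together with Cauchy--Schwarz), deduces that $y=w-x$ is an integrator because a difference of integrators is one, and finally verifies independence by computing $\mathrm{Cov}(x(t),y(s))$ explicitly in two case distinctions. You instead define $x(t)=\E[w(t)\mid w(s_{1}),\dots,w(s_{n})]$, so that independence of $x$ and $y=(I-P)w$ is automatic from orthogonality in the Gaussian space, and the integrator bounds for \emph{both} pieces drop out simultaneously from the Pythagorean identity $\E I(w)^{2}=\E I(x)^{2}+\E I(y)^{2}$; the only real work left is identifying the projection with the polygonal line via the Markov property and the bridge mean (and continuing it as the constant $w(s_{n})$ on $[s_{n},1]$, a point the paper's formulas leave unspecified). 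Your route is shorter, gives both pieces the same integrator constant as $w$, and replaces the paper's two computations by structural facts; the paper's route is more elementary (no projection machinery) and produces along the way the explicit bridge covariance $g(t)=t-t^{2}/(s_{k}-s_{k-1})$ of $y$, which is exactly what the subsequent theorem on self-intersection local time uses. One caveat applies to both arguments equally: each invokes the fact that Brownian motion is itself an integrator, which is true for the standard definition with $C\sum_{k}a_{k}^{2}\Delta t_{k}$ on the right-hand side but false under the paper's definition as literally stated with $\Delta t_{k}^{2}$ (evidently a typo, since the paper's own deduction for $y$ also requires $w$ to be an integrator); your proof, like the paper's, should be read against the corrected definition.
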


\begin{proof}
Consider the polygonal line $x(t)$. Since each edge of the polygonal line is a straight-line segment, it can be represented as a linear combination of its vertices, so we have
\begin{equation*}
\begin{aligned}
x(t)=\left\{
      \begin{array}{lcl}
         \frac{t}{s_{1}}w(s_{1}), \quad 0\leq t\leq s_{1},\\
         \frac{s_{2}-t}{s_{2}-s_{1}}w(s_{1})+\frac{t-s_{1}}{s_{2}-s_{1}}w(s_{2}),\quad s_{1}< t\leq s_{2},\\
         \cdots \\
         \frac{s_{n}-t}{s_{n}-s_{n-1}}w(s_{n-1})+\frac{t-s_{n-1}}{s_{n}-s_{n-1}}w(s_{n}),\quad s_{n-1}< t\leq s_{n}.\\
      \end{array}
    \right.
\end{aligned}
\end{equation*}
For a partition $0= t_{0}< \cdots < t_{N}=1$, define a sequence of corresponding constants $c_{0},\cdots,c_{N-1}$. If $x(t_{m})$ and $x(t_{m+1})$ lie in different edges of the polygonal line, we add a point $s_{k}$ to the partition between $t_{m}$ and $t_{m+1}$. From this, we build a new partition $0= \widetilde{t}_{0}< \cdots < \widetilde{t}_{q}=1$. Next, take any $\widetilde{c}_{p}$ such that when ${t}_{p}\in(t_{m},s_{m+1}]$, $\widetilde{c}_{p}=c_{m}$. To show that $x$ is an integrator, we estimate:
\begin{equation*}
\begin{aligned}
&\E(\sum_{m=0}^{N-1}c_{m}(x(t_{m+1})-x(t_{m})))^{2}\\
=&\E(\sum_{k=0}^{n-1}\sum_{\widetilde{t}_{p}\in(s_{k_{1}};s_{k_{1}+1}]}\widetilde{c}_{p}(\widetilde{t}_{p+1}-\widetilde{t}_{p})\frac{w(s_{k+1})-w(s_{k})}{s_{k+1}-s_{k}})^{2}\\
=&\E(\sum_{k=0}^{n-1}\frac{w(s_{k+1})-w(s_{k})}{s_{k+1}-s_{k}}\sum_{\widetilde{t}_{p}\in(s_{k_{1}};s_{k_{1}+1}]}\widetilde{c}_{p}(\widetilde{t}_{p+1}-\widetilde{t}_{p}))^{2}\\
=&\sum_{k=0}^{n-1}(\sum_{\widetilde{t}_{p}\in(s_{k_{1}};s_{k_{1}+1}]}\widetilde{c}_{p}(\widetilde{t}_{p+1}-\widetilde{t}_{p}))^{2}\\
\leq&\sum_{k=0}^{n-1}(\sum_{\widetilde{t}_{p}\in(s_{k_{1}};s_{k_{1}+1}]}\widetilde{c}_{p}^{2}\sum_{\widetilde{t}_{p}\in(s_{k_{1}};s_{k_{1}+1}]}(\widetilde{t}_{p+1}-\widetilde{t}_{p})^{2})\\
\leq &\sum_{m=0}^{N-1}c^{2}_{m}\Delta {t}_{m}^{2}.
\end{aligned}
\end{equation*}
From this follows that $x$ is an integrator by the definition of the integrator. Since $w(t)$ and $x(t)$ are both integrator, and $y(t)=w(t)-x(t)$, it follows that $y(t)$ is also an integrator. Next, since $y(t)$ and $x(t)$ are both Gaussian processes, to verify their independence, we only need to check that their covariance is zero. For any $t\in[s_{k-1},s_{k}]$, $s\in[s'_{k-1},s'_{k}]$, the covariance of $y(s)$ and $x(t)$ can be expressed as follows:
\begin{equation}\label{11.11}
\begin{aligned}
Cov(x,y)=& \E[(x(t)-\E (x(t)))(y(s)-\E (y(s)))]\\
        =&\frac{s_{k}-t}{s_{k}-s_{k-1}}s_{k-1}\wedge s+\frac{t-s_{k-1}}{s_{k}-s_{k-1}}s_{k}\wedge s\\
        &-\frac{s_{k}-t}{s_{k}-s_{k-1}}\frac{s'_{k}-s}{s'_{k}-s'_{k-1}}s_{k-1}\wedge s'_{k-1}-\frac{t-s_{k-1}}{s_{k}-s_{k-1}}\frac{s-s'_{k-1}}{s'_{k}-s'_{k-1}}s_{k}\wedge s'_{k}\\
        &- \frac{s_{k}-t}{s_{k}-s_{k-1}}\frac{s-s'_{k-1}}{s'_{k}-s'_{k-1}}s_{k-1}\wedge s'_{k}-\frac{t-s_{k-1}}{s_{k}-s_{k-1}}\frac{s'_{k}-s}{s'_{k}-s'_{k-1}}s_{k}\wedge s'_{k-1}.
\end{aligned}
\end{equation}
Next, we divide the problem into two cases. Firstly, assume $s_{k-1}\leq t\leq s_{k} \leq s'_{k-1}\leq s \leq s'_{k}$, then (\ref{11.11}) can be expressed as follows:
\begin{equation*}
\begin{aligned}
Cov(x,y)=&\frac{(s_{k}-t)s_{k-1}(s'_{k}-s'_{k-1}-s'_{k}+s-s+s'_{k-1})}{(s_{k}-s_{k-1})(s'_{k}-s'_{k-1})}\\
&+\frac{(t-s_{k-1})s_{k}(s'_{k}-s'_{k-1}-s+s'_{k-1}-s'_{k}+s)}{(s_{k}-s_{k-1})(s'_{k}-s'_{k-1})}=0.
\end{aligned}
\end{equation*}
Secondly, assume $ s'_{k-1}\leq s \leq s'_{k} \leq s_{k-1}\leq t\leq s_{k} $, then (\ref{11.11}) can be expressed as follows:
\begin{equation*}
\begin{aligned}
Cov(x,y)=&\frac{(s_{k}-t)[s(s'_{k}-s'_{k-1})-s'_{k-1}(s'_{k}-s)-s'_{k}(s-s'_{k-1})]}{(s_{k}-s_{k-1})(s'_{k}-s'_{k-1})}\\
 &+\frac{(t-s_{k-1})[s(s'_{k}-s'_{k-1})-s'_{k}(s-s'_{k-1})-s'_{k-1}(s'_{k}-s)]}{(s_{k}-s_{k-1})(s'_{k}-s'_{k-1})}=0.
\end{aligned}
\end{equation*}
Thus we can conclude that $y(t)$ and $x(t)$ are independent.
\end{proof}

\begin{theorem}
Let $\{w(t);t\in[0,1]\}$ be a Brownian motion in $\R^{2}$. Fix $0\leq s_{1}< \cdots < s_{n} \leq 1$, then for the conditional expectation of the self-intersection local time of the Brownian motion the following result holds
\begin{equation*}
	\begin{aligned}
		&\E(\int_{0\leq t_{1}< t_{2} \leq1}\delta_{\vec{u}}(w(t_{2})-w(t_{1}))dt_{1}dt_{2}|w(s_{1})=v_{1},\cdots,w(s_{n})=v_{n})\\
		\sim &\frac{(s^{*})^{2}}{\pi u v^{*}}e^{-\sqrt{2}uv^{*}/s^{*}}, \quad v^{*}\rightarrow\infty,
	\end{aligned}
\end{equation*}
where $\vec{u}=(u,u)$, $u\neq0$ is fixed,  $k^{*}=\underset{k}{\arg\min}\|v_{k-1}+v_{k}\|$, $v^{*}=\|v_{k^{*}-1}+v_{k^{*}}\|$, $s^{*}=s_{k^{*}}-s_{k^{*}-1}$.
\end{theorem}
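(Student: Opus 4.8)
The plan is to combine the Gaussian decomposition of Lemma~\ref{lemma2.1} with a Laplace-type analysis of the resulting double integral. First I would use Lemma~\ref{lemma2.1} to write $w(t)=x(t)+y(t)$, where $x$ is the polygonal line through $0,w(s_1),\dots,w(s_n)$ and $y$ is an independent centered Gaussian process vanishing at $0,s_1,\dots,s_n$. Conditioning on $\{w(s_k)=v_k\}$ freezes $x$ to the deterministic polygonal line $\bar x$ through $0,v_1,\dots,v_n$ while leaving the law of $y$ unchanged, since $y$ is independent of $(w(s_1),\dots,w(s_n))$. Hence, conditionally, $w(t_2)-w(t_1)=[\bar x(t_2)-\bar x(t_1)]+[y(t_2)-y(t_1)]$ is a two-dimensional Gaussian vector with deterministic mean $m(t_1,t_2):=\bar x(t_2)-\bar x(t_1)$ and covariance $\sigma^2(t_1,t_2)I_2$, where $\sigma^2$ is the scalar variance of $y(t_2)-y(t_1)$.

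Next I would represent the delta mass through the heat kernel (the $\varepsilon\to0$ limit of the averaged indicators $f_\varepsilon$ used in Example~\ref{thm3}) and pass the conditional expectation inside the time integral, obtaining
\begin{equation*}
\E\big(\,\cdot\,\mid w(s_1)=v_1,\dots,w(s_n)=v_n\big)=\int_{0\le t_1<t_2\le 1}\frac{1}{2\pi\sigma^2(t_1,t_2)}\exp\Big(-\frac{\|\vec{u}-m(t_1,t_2)\|^2}{2\sigma^2(t_1,t_2)}\Big)\,dt_1\,dt_2.
\end{equation*}
For $t_1,t_2$ on a common edge $[s_{k-1},s_k]$, the Markov property gives the Brownian-bridge formulas: $m$ is a multiple of the edge increment and $\sigma^2(t_1,t_2)=\frac{(t_2-t_1)(s_k-s_{k-1}-(t_2-t_1))}{s_k-s_{k-1}}$, so that on a fixed edge the integrand depends on $(t_1,t_2)$ only through the gap $\tau=t_2-t_1$.

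The heart of the proof is the asymptotic evaluation as $v^*\to\infty$. The plan is to show that the double integral localizes: contributions from pairs $(t_1,t_2)$ lying on distinct edges carry a Gaussian cost that is super-exponentially small compared with $e^{-\sqrt{2}uv^*/s^*}$ and are negligible, so the integral concentrates on the single critical edge $[s_{k^*-1},s_{k^*}]$ singled out by $\min_k\|v_{k-1}+v_k\|$. On that edge I would reduce to the one-dimensional gap integral $\int_0^{s^*}(s^*-\tau)\frac{1}{2\pi\sigma^2}e^{-\|\vec{u}-m\|^2/(2\sigma^2)}\,d\tau$; after the substitution that isolates the $1/\tau$ singularity of $1/\sigma^2$ at $\tau=0$ it becomes a modified Bessel integral $\int_0^{\infty}\tau^{-1}e^{-A/\tau-B\tau}\,d\tau=2K_0(2\sqrt{AB})$, with $A,B$ arranged so that $2\sqrt{AB}=\sqrt{2}\,u\,v^*/s^*$. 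Feeding in the large-argument asymptotic $K_0(z)\sim\sqrt{\pi/(2z)}\,e^{-z}$ then yields simultaneously the exponential rate $e^{-\sqrt{2}uv^*/s^*}$ and the algebraic prefactor $(s^*)^2/(\pi u v^*)$.

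The main obstacle will be the localization step: one must show that, uniformly in the position of the pair along the edge and across all edges, no configuration other than the near-diagonal pairs on the critical edge contributes at the exponential scale $e^{-\sqrt{2}uv^*/s^*}$, and in particular that the saddle point in $\tau$ is interior and non-degenerate so that Watson's lemma (equivalently the $K_0$ asymptotic) applies with exactly the stated constant. A secondary technical point is justifying the interchange of $\lim_{v^*\to\infty}$, the delta-approximation $f_\varepsilon\to\delta_{\vec{u}}$, and the time integration, which I would handle with a dominated-convergence bound of the same type as the one employed in the proof of Theorem~\ref{thm2.8}.
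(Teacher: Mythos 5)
Your overall architecture coincides step for step with the paper's proof: decompose $w=x+y$ via Lemma~\ref{lemma2.1}, use independence to freeze the polygonal part at the conditioned values, reduce the conditional expectation to the time integral of the Gaussian density of $\Delta y$ evaluated at $\vec{u}-\Delta x$, use the bridge variance $\sigma^{2}(\tau)=\tau(s_{k}-s_{k-1}-\tau)/(s_{k}-s_{k-1})$ on a common edge, and discard cross-edge pairs whose cost is Gaussian in $v^{*}$ and hence negligible. All of this is exactly what the paper does, in the same order. The genuine divergence is in the final one-dimensional evaluation on the critical edge, and that is where your proposal has a gap.

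Two concrete problems arise there. First, the cross term: with $m(\tau)=\tau z/s^{*}$ and $z$ the relevant edge vector (the paper takes $z=v_{k}+v_{k-1}$; your ``edge increment'' would be $v_{k}-v_{k-1}$), expanding $\|\vec{u}-m(\tau)\|^{2}=2u^{2}-2\tau u(z_{1}+z_{2})/s^{*}+\tau^{2}\|z\|^{2}/(s^{*})^{2}$ and dividing by $2\sigma^{2}(\tau)$ leaves a constant $-u(z_{1}+z_{2})/s^{*}$ in the exponent, i.e.\ a factor $e^{u(z_{1}+z_{2})/s^{*}}$ of the \emph{same} exponential order as $e^{-\sqrt{2}uv^{*}/s^{*}}$; no arrangement of $A,B$ removes it, so your reduction yields the stated rate only when $z$ is orthogonal to $(1,1)$. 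Second, and independently: even granting the reduction to $\int_{0}^{\infty}\tau^{-1}e^{-A/\tau-B\tau}d\tau=2K_{0}(2\sqrt{AB})$ with $2\sqrt{AB}=\sqrt{2}uv^{*}/s^{*}$, the large-argument form $K_{0}(x)\sim\sqrt{\pi/(2x)}\,e^{-x}$ produces an algebraic prefactor of order $(v^{*})^{-1/2}$, not the $(v^{*})^{-1}$ you assert; a non-degenerate interior saddle (Watson's lemma) always gives the half power $\sqrt{2\pi/p''(a^{*})}$, and no choice of $A,B$ compatible with the stated exponential rate converts $(v^{*})^{-1/2}$ into $(v^{*})^{-1}$. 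The paper reaches $(s^{*})^{2}/(\pi uv^{*})$ by a different device: it substitutes $b=p(a_{1})$, splits at the minimizer into two inverse branches $h_{1},h_{2}$, writes the integral as $\int_{f}^{\infty}\frac{s\,h_{i}'(b)}{\pi h_{i}(b)}e^{-b}db$, and passes to the limit under the claims $h_{i}\sim\sqrt{2}us^{*}/v^{*}$ and $h_{i}'\sim(s^{*})^{2}(v^{*})^{-2}$ --- note, however, that $h_{i}'(b)$ blows up like $(b-f)^{-1/2}$ at the endpoint, which is precisely where the half power in your route originates, so the two routes genuinely disagree on the power of $v^{*}$. As written, your method does not deliver the formula you claim it delivers; to complete the proposal you must either locate an error in the $K_{0}$ reduction or reconcile your prefactor with the paper's endpoint analysis, and you must treat the cross term explicitly rather than absorb it into ``arranging $A,B$.''
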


\begin{proof}
From Lemma \ref{lemma2.1}, the conditional expectation of self-intersection local time can be written as follows:
\begin{equation}\label{TM2.3.13}
\begin{aligned}
&\E(\int_{0\leq t_{1}< t_{2} \leq1}\delta_{0}(w(t_{2})-w(t_{1}))dt_{1}dt_{2}|w(s_{1})=v_{1},\cdots,w(s_{n})=v_{n})\\
=&\E(\int_{0\leq t_{1}< t_{2} \leq1}\delta_{0}(\Delta y(t_{2},t_{1})+\Delta x(t_{2},t_{1}))dt_{1}dt_{2}|w(s_{1})=v_{1},\cdots,w(s_{n})=v_{n})\\
=&\E(\int_{0\leq t_{1}< t_{2} \leq1}\delta_{0}(\Delta y(t_{2},t_{1})+\Delta x(t_{2},t_{1}))dt_{1}dt_{2})|_{x(s_{1})=v_{1},\cdots,x(s_{n})=v_{n}}.
\end{aligned}
\end{equation}
First, we consider
\begin{equation*}
\begin{aligned}
\E(\delta_{0}(\Delta y(t_{2},t_{1})+\Delta x(t_{2},t_{1})))=q(\Delta x(t_{2},t_{1})),
\end{aligned}
\end{equation*}
where $q$ is the density of $\Delta y$, and $q$ is uniformly integrable when $0\leq t_{1}< t_{2} \leq1$. When $t\in[s_{k-1},s_{k}]$, denote $v_{k}=(v_{k}^{1},v_{k}^{2})$.
Now, the covariance matrix of $y(t)$ is
\begin{equation*}
	\begin{aligned}
		\left\{
		\begin{array}{cc}
			g(t) & 0\\
			0 & g(t)\\
		\end{array}
		\right\},
	\end{aligned}
\end{equation*}
where $g(t)=t-\frac{t^{2}}{s_{k}-s_{k-1}}$. Then we can rewrite (\ref{TM2.3.13}) as follows:
\begin{equation*}
	\begin{aligned}
		&\E(\int_{0\leq t_{1}< t_{2} \leq1}\delta_{0}(\Delta y(t_{2},t_{1})+\Delta x(t_{2},t_{1}))dt_{1}dt_{2})|_{x(s_{1})=v_{1},\cdots,x(s_{n})=v_{n}}\\
		=&\sum_{k=2}^{n}\int_{s_{k-1}\leq t_{1}< t_{2} \leq s_{k}}q(\Delta x(t_{2},t_{1}))dt_{1}dt_{2}\\
		&+\sum_{i\neq j}\int_{s_{i-1}\leq t_{1} \leq s_{i}}\int_{s_{j-1}\leq t_{2} \leq s_{j}}q(\Delta x(t_{2},t_{1}))dt_{1}dt_{2}.
	\end{aligned}
\end{equation*}
Thus, when $t_{1}, t_{2}$ in the same interval $[s_{k-1},s_{k}]$, we can get the covariance of $\Delta y$: 
\begin{equation*}
	\begin{aligned}
		g(t_{2}-t_{1})=(t_{2}-t_{1})-\frac{(t_{2}-t_{1})^{2}}{s_{k}-s_{k-1}}.
	\end{aligned}
\end{equation*}
The conditional expectation can be written as follows:
\begin{equation}\label{Tm2.3.1}
	\begin{aligned}
		&\int_{s_{k-1}\leq t_{1}< t_{2} \leq s_{k}}\frac{s_{k}-s_{k-1}}{2\pi (t_{2}-t_{1})(s_{k}-s_{k-1}-(t_{2}-t_{1}))}\\
		&\exp\{-\frac{\sum_{i=1}^{2}(s_{k}-s_{k-1})(u-\frac{(t_{2}-t_{1})(v^{i}_{k}+v^{i}_{k-1})}{s_{k}-s_{k-1}})^{2}}{2 (t_{2}-t_{1})(s_{k}-s_{k-1}-(t_{2}-t_{1}))}\}dt_{1}dt_{2}.
	\end{aligned}
\end{equation}
Denote $s_{k}-s_{k-1}=s$, by the change of variables $a_{1}=t_{2}-t_{1}, a_{2}=t_{2}+t_{1}$, (\ref{Tm2.3.1}) can be written as follows:
\begin{equation}\label{Tm2.3.2}
	\begin{aligned}
		&\int_{0}^{s}\frac{s}{\pi a_{1}}\exp\{-\frac{\sum_{i=1}^{2}s(u-\frac{a_{1}(v^{i}_{k}+v^{i}_{k-1})}{s})^{2}}{2a_{1}(s-a_{1})}\}da_{1}.
	\end{aligned}
\end{equation}
Denote $\sum_{i=1}^{2}(v^{i}_{k}+v^{i}_{k-1})^{2}=\|v_{k}+v_{k-1}\|^{2}$, $v^{1}_{k}+v^{1}_{k-1}+v^{2}_{k}+v^{2}_{k-1}=v$. Do the change of variable $b=p(a_{1})=\frac{\sum_{i=1}^{2}s(u-\frac{a_{1}(v^{i}_{k}+v^{i}_{k-1})}{s})^{2}}{2a_{1}(s-a_{1})}$. Here the function $p$ has only one minimal value on $[0,s]$. When $a_{1}=s$, $b=\infty$, when $a_{1}=0$, $b=\infty$. To find the minimal value of $p(a_{1})$, we need to check the derivative of the function $p(a_{1})$. By calculation, the extreme point is
\begin{equation*}
	\begin{aligned}
		a^{*}=\frac{-8u^{2}s+4us\sqrt{4u^{2}+(2\|v_{k}+v_{k-1}\|^{2}-4uv)}}{4\|v_{k}+v_{k-1}\|^{2}-8uv},
	\end{aligned}
\end{equation*}
then the minimal value of $p$ is 
\begin{equation*}
	\begin{aligned}
		&f(\|v_{k}+v_{k-1}\|,v)\\
		=&\frac{[16u^{4}s+2u^{2}s(2\|v_{k}+v_{k-1}\|^{2}-4uv)+8u^{3}s\sqrt{4u^{2}+(2\|v_{k}+v_{k-1}\|^{2}-4uv)}]\|v_{k}+v_{k-1}\|^{2}}{q(\|v_{k}+v_{k-1}\|,v)}\\
		&+\frac{u^{2}s(2\|v_{k}+v_{k-1}\|^{2}-4uv)^{2}+4u^{3}sv(2\|v_{k}+v_{k-1}\|^{2}-4uv)}{q(\|v_{k}+v_{k-1}\|,v)}\\
		&+\frac{2u^{2}sv(2\|v_{k}+v_{k-1}\|^{2}-4uv)\sqrt{4u^{2}+(2\|v_{k}+v_{k-1}\|^{2}-4uv)}}{q(\|v_{k}+v_{k-1}\|,v)},
	\end{aligned}
\end{equation*}
where
\begin{equation*}
	\begin{aligned}
		&q(\|v_{k}+v_{k-1}\|,v)\\=&2us^{2}(2\|v_{k}+v_{k-1}\|^{2}-4uv)\sqrt{4u^{2}+(2\|v_{k}+v_{k-1}\|^{2}-4uv)}-8u^{2}s^{2}(2\|v_{k}+v_{k-1}\|^{2}-4uv)\\
		&-32u^{4}s^{2}+16u^{3}s^{2}\sqrt{4u^{2}+(2\|v_{k}+v_{k-1}\|^{2}-4uv)}-8u^{2}s^{2}(2\|v_{k}+v_{k-1}\|^{2}-4uv).
	\end{aligned}
\end{equation*}
When $0<a_{1}<s$ and $u\neq 0$, for function $p(a_{1})=\frac{\sum_{i=1}^{2}s(u-\frac{a_{1}(v^{i}_{k}+v^{i}_{k-1})}{s})^{2}}{2a_{1}(s-a_{1})}$, both $2a_{1}(s-a_{1})$ and $\sum_{i=1}^{2}s(u-\frac{a_{1}(v^{i}_{k}+v^{i}_{k-1})}{s})^{2}$ are greater than $0$. Therefore, the function $f(\|v_{k}+v_{k-1}\|,v)>0$. Here, \( p \) is decreasing on \([0, a^{*}]\) and increasing on \((a^{*}, s]\). Therefore, there exists \( h_{1} \), which is the inverse function of \( p \) on the interval \([0, a^{*}]\), and \( h_{2} \), which is the inverse function of \( p \) on the interval \((a^{*}, s]\), 
\begin{equation*}
	\begin{aligned}
		a_{1}=&h_{1}(b,\|v_{k}+v_{k-1}\|,v)\\
		=&\frac{s^{2}uv+bs^{2}}{\|v_{k}+v_{k-1}\|^{2}+2bs}+\sqrt{(\frac{s^{2}uv+bs^{2}}{\|v_{k}+v_{k-1}\|^{2}+2bs})^{2}-\frac{2u^{2}s^{2}}{\|v_{k}+v_{k-1}\|^{2}+2bs}},
	\end{aligned}
\end{equation*}
and
\begin{equation*}
	\begin{aligned}
		a_{1}=&h_{2}(b,\|v_{k}+v_{k-1}\|,v)\\
		=&\frac{s^{2}uv+bs^{2}}{\|v_{k}+v_{k-1}\|^{2}+2bs}-\sqrt{(\frac{s^{2}uv+bs^{2}}{\|v_{k}+v_{k-1}\|^{2}+2bs})^{2}-\frac{2u^{2}s^{2}}{\|v_{k}+v_{k-1}\|^{2}+2bs}}.
	\end{aligned}
\end{equation*}
Then the integral (\ref{Tm2.3.2}) can be written as follows:
\begin{equation}\label{Tm2.3.3}
	\begin{aligned}
		&\int_{f(\|v_{k}+v_{k-1}\|,v)}^{\infty}\frac{sh_{1}'(b,\|v_{k}+v_{k-1}\|,v)}{\pi h_{1}(b,\|v_{k}+v_{k-1}\|,v)}e^{-b}db\\
		&+\int_{f(\|v_{k}+v_{k-1}\|,v)}^{\infty}\frac{sh_{2}'(b,\|v_{k}+v_{k-1}\|,v)}{\pi h_{2}(b,\|v_{k}+v_{k-1}\|,v)}e^{-b}db,
	\end{aligned}
\end{equation}
where $h_{1}',h_{2}'$ are the derivative of $h_{1},h_{2}$ with respect to $b$
\begin{equation*}
	\begin{aligned}
		&h'_{1}(b,\|v_{k}+v_{k-1}\|,v)\\
		=&\frac{s^{2}(\|v_{k}+v_{k-1}\|^{2}+2bs)-(s^{2}uv+bs^{2})2s}{(\|v_{k}+v_{k-1}\|^{2}+2bs)^{2}}\\
		&+\frac{1}{2\sqrt{(\frac{s^{2}uv+bs^{2}}{\|v_{k}+v_{k-1}\|^{2}+2bs})^{2}-\frac{2u^{2}s^{2}}{\|v_{k}+v_{k-1}\|^{2}+2bs}}}\\
		&[2(\frac{s^{2}uv+bs^{2}}{\|v_{k}+v_{k-1}\|^{2}+2bs})\frac{s^{2}(\|v_{k}+v_{k-1}\|^{2}+2bs)-(s^{2}uv+bs^{2})2s}{(\|v_{k}+v_{k-1}\|^{2}+2bs)^{2}}\\
		&+\frac{4u^{2}s^{3}}{(\|v_{k}+v_{k-1}\|^{2}+2bs)^{2}}].
	\end{aligned}
\end{equation*}
Because the asymptotic result for both integral in (\ref{Tm2.3.3}) are the same, so it is enough to check the first integral of (\ref{Tm2.3.3}),
\begin{equation*}
	\begin{aligned}
		&\int_{f(\|v_{k}+v_{k-1}\|,v)}^{\infty}\frac{sh_{1}'(b,\|v_{k}+v_{k-1}\|,v)}{\pi h_{1}(b,\|v_{k}+v_{k-1}\|,v)}e^{-b}db\\
		=&e^{-f(\|v_{k}+v_{k-1}\|,v)}\int_{0}^{\infty}\frac{sh_{1}'(a+f(\|v_{k}+v_{k-1}\|,v),\|v_{k}+v_{k-1}\|,v)}{\pi h_{1}(a+f(\|v_{k}+v_{k-1}\|,v),\|v_{k}+v_{k-1}\|,v)}e^{-a}da.
	\end{aligned}
\end{equation*}
For  sufficiently large $\|v_{k}+v_{k-1}\|$, one can get the following estimation  
\begin{equation*}
	\begin{aligned}
		|\frac{sh_{1}'(a+f(\|v_{k}+v_{k-1}\|,v),\|v_{k}+v_{k-1}\|,v)\|v_{k}+v_{k-1}\|}{\pi h_{1}(a+f(\|v_{k}+v_{k-1}\|,v),\|v_{k}+v_{k-1}\|,v)}e^{-a}|\leq C_{1}e^{-a},
	\end{aligned}
\end{equation*}
where $C_{1}$ is a constant. Here, we will consider the asymptotic behavior when $\|v_{k}+v_{k-1}\|\rightarrow\infty$, so $f(\|v_{k}+v_{k-1}\|,v)$ is equivaulent to $\frac{\sqrt{2}u}{s}\|v_{k}+v_{k-1}\|$,  $h_{1}(a+f(\|v_{k}+v_{k-1}\|,v),\|v_{k}+v_{k-1}\|,v)$ is equivalent to $\sqrt{2}us\|v_{k}+v_{k-1}\|^{-1}$, $h_{1}'(a+f(\|v_{k}+v_{k-1}\|,v),\|v_{k}+v_{k-1}\|,v)$ is equivalent to $s^{2}\|v_{k}+v_{k-1}\|^{-2}$. Taking $C_{1}e^{-a}$ as the dominating function, we can apply the dominated convergence theorem, 
\begin{equation*}
	\begin{aligned}
		&\lim_{\|v_{k}+v_{k-1}\|\rightarrow\infty}
		\int_{0}^{\infty}\frac{sh_{1}'(a+f(\|v_{k}+v_{k-1}\|,v),\|v_{k}+v_{k-1}\|,v)\|v_{k}+v_{k-1}\|}{\pi h_{1}(a+f(\|v_{k}+v_{k-1}\|,v),\|v_{k}+v_{k-1}\|,v)}e^{-a}da\\
		=&\int_{0}^{\infty}\frac{s^{2}}{\pi u}e^{-a}da=\frac{s^{2}}{\pi u}.
	\end{aligned}
\end{equation*}
Thus we have for $t_{1},t_{2}$ in the same interval
\begin{equation*}
	\begin{aligned}
		&\int_{s_{k-1}\leq t_{1}< t_{2} \leq s_{k}}\frac{s}{2\pi (t_{2}-t_{1})(s-(t_{2}-t_{1}))}\\
		&\exp\{-\frac{\sum_{i=1}^{2}(s)(u-\frac{(t_{2}-t_{1})(v^{i}_{k}+v^{i}_{k-1})}{s})^{2}}{2 (t_{2}-t_{1})(s-(t_{2}-t_{1}))}\}dt_{1}dt_{2}\\
		\sim &\frac{s^{2}}{\pi u}\|v_{k}+v_{k-1}\|^{-1}e^{-\frac{\sqrt{2}u}{s}\|v_{k}+v_{k-1}\|}, \quad \|v_{k}+v_{k-1}\|\rightarrow\infty.
	\end{aligned}
\end{equation*}
When $t_{1}\in[s_{k-1},s_{k}]$ and $t_{2}\in[s_{k'-1},s_{k'}]$, we divide it into two cases. First case is $s_{k-1}\leq t_{1}\leq s_{k} \leq s'_{k-1}\leq t_{2} \leq s'_{k}$, on the second when $ s'_{k-1}\leq t_{2} \leq s'_{k} \leq s_{k-1}\leq t_{1}\leq s_{k} $. We will only deal with the first case, because the analysis of the other case is similar. We denote the variance of $\Delta y$ by $g(t_{2},t_{1})$. By the relation $s'_{k}-s_{k-1}>t_{2}-t_{1}> s'_{k-1}-s_{k}$, there must exist a constant $n>0$, such that $t_{2}=t_{1}+n$. Then regarding the variance $g(t_{2},t_{1})$ as a function of $n$ for a fixed $t_{1}$, we denote it by $k(n)$. Here, $k(n)$ can be expressed as follows:
\begin{equation*}
	\begin{aligned}
		k(n)=an^{2}+bn+c,
	\end{aligned}
\end{equation*}
where $a,b,c$ are some constants related to $t_{1},s_{k},s_{k-1},s'_{k-1},s_{k}$. Because the variance $k(n)\geq 0$, and $k(0)=0$. Based on this property, we can conclude that for $h>0$, the function $k(n)$ is monotonically increasing and strictly greater than $0$. Then, considering the inequality $s'_{k}-s_{k-1}>h> s'_{k-1}-s_{k}$, there must exist two constants $C_{3},C_{4}$ such that $C_{3}\leq k(n) \leq C_{4}$. Now, the conditional expectation can be expressed as follows:
\begin{equation}\label{Tm2.3.4}
	\begin{aligned}
		&\int_{s_{k-1}\leq t_{1}\leq s_{k} \leq s'_{k-1}\leq t_{2} \leq s'_{k}}\frac{1}{2\pi  g(t_{2},t_{1})}\exp\{-\frac{1}{2g(t_{2},t_{1})}\\
		&\sum_{i=1}^{2}(u-\frac{t_{2}(v^{i}_{k'}+v^{i}_{k'-1})}{s'_{k}-s'_{k-1}}+\frac{t_{1}(v^{i}_{k}+v^{i}_{k-1})}{s_{k}-s_{k-1}})^{2}\}dt_{1}dt_{2}\\
		\leq &\int_{s_{k-1}\leq t_{1}\leq s_{k} \leq s'_{k-1}\leq t_{2} \leq s'_{k}}\frac{1}{2\pi  C_{3}}\exp\{-\frac{1}{2C_{4}}\sum_{i=1}^{2}(u-\frac{(t_{2}-t_{1})(v^{i}_{k}+v^{i}_{k-1})}{s_{k}-s_{k-1}})^{2}\}dt_{1}dt_{2}.
	\end{aligned}
\end{equation}
Here, we assume $\frac{(v^{i}_{k}+v^{i}_{k-1})}{s_{k}-s_{k-1}}\leq\frac{(v^{i}_{k'}+v^{i}_{k'-1})}{s'_{k}-s'_{k-1}}$. By the change of variables $a_{1}=t_{2}-t_{1}, a_{2}=t_{2}+t_{1}$, so (\ref{Tm2.3.4}) can be written as follows:
\begin{equation}\label{th2.3.1}
	\begin{aligned}
		&\int_{s'_{k-1}-s_{k}}^{s'_{k}-s_{k-1}}\frac{s'_{k}-s'_{k-1}-a_{1}}{\pi C_{3}}\exp\{-\frac{1}{2C_{4}}\sum_{i=1}^{2}(u-\frac{a_{1}(v^{i}_{k}+v^{i}_{k-1})}{s_{k}-s_{k-1}})^{2}\}da_{1}.
	\end{aligned}
\end{equation}
Denote $\sum_{i=1}^{d}(v^{i}_{k}+v^{i}_{k-1})^{2}=\|v_{k}+v_{k-1}\|^{2}$, by a change of variable $b=\frac{1}{2C_{4}}\sum_{i=1}^{2}(u-\frac{a_{1}(v^{i}_{k}+v^{i}_{k-1})}{s_{k}-s_{k-1}})^{2}$, we can get its inverse function
\begin{equation*}
	\begin{aligned}
		a_{1}=h(b,\|v_{k}+v_{k-1}\|)=\frac{(s_{k}-s_{k-1})(u+\sqrt{2bC_{4}})}{\|v_{k}+v_{k-1}\|},
	\end{aligned}
\end{equation*}
and the derivative of the inverse function is
\begin{equation*}
	\begin{aligned}
		h'(b,\|v_{k}+v_{k-1}\|)=\frac{(s_{k}-s_{k-1})\sqrt{C_{4}}}{\sqrt{2b}\|v_{k}+v_{k-1}\|}.
	\end{aligned}
\end{equation*}
Denote $p(\|v_{k}+v_{k-1}\|)=\frac{(s'_{k-1}-s_{k})^{2}\|v_{k}+v_{k-1}\|^{2}}{2C_{4}(s_{k}-s_{k-1})^{2}}, q(\|v_{k}+v_{k-1}\|)=\frac{(s'_{k}-s_{k-1})^{2}\|v_{k}+v_{k-1}\|^{2}}{2C_{4}(s_{k}-s_{k-1})^{2}}$, the integral (\ref{th2.3.1}) satisfies the following relation,
\begin{equation*}
	\begin{aligned}
		&\int_{s'_{k-1}-s_{k}}^{s'_{k}-s_{k-1}}\frac{s'_{k}-s'_{k-1}-a_{1}}{\pi C_{3}}\exp\{-\frac{1}{2C_{4}}\sum_{i=1}^{2}(u-\frac{a_{1}(v^{i}_{k}+v^{i}_{k-1})}{s_{k}-s_{k-1}})^{2}\}da_{1}\\
		=&\int_{p(\|v_{k}+v_{k-1}\|)}^{q(\|v_{k}+v_{k-1}\|)}\frac{(s'_{k}-s'_{k-1}-h(b))h'(b)}{\pi C_{3}}e^{-b}db\\
		\leq&e^{-p(\|v_{k}+v_{k-1}\|)}\int_{0}^{\infty}\frac{(s'_{k}-s'_{k-1}-h(a+p(\|v_{k}+v_{k-1}\|))h'(a+p(\|v_{k}+v_{k-1}\|))}{\pi C_{3}}e^{-a}da.
	\end{aligned}
\end{equation*}
By the form of $h$ and $h'$, we have
\begin{equation*}
	\begin{aligned}
		h(a+p(\|v_{k}+v_{k-1}\|))=\frac{(s_{k}-s_{k-1})\sqrt{u+2(a+p(\|v_{k}+v_{k-1}\|))C_{3}}}{\|v_{k}+v_{k-1}\|},
	\end{aligned}
\end{equation*}
and
\begin{equation*}
	\begin{aligned}
		h'(a+p(\|v_{k}+v_{k-1}\|))=\frac{(s_{k}-s_{k-1})\sqrt{C_{3}}}{\sqrt{2(a+p(\|v_{k}+v_{k-1}\|))}\|v_{k}+v_{k-1}\|}.
	\end{aligned}
\end{equation*}
Then we have
\begin{equation*}
	\begin{aligned}
		&|\frac{\|v_{k}+v_{k-1}\|^{2}(s'_{k}-s'_{k-1}-h(a+p(\|v_{k}+v_{k-1}\|)))h'(a+p(\|v_{k}+v_{k-1}\|))}{\pi C_{3}}e^{-a}|\\
		\leq &|\frac{\|v_{k}+v_{k-1}\|^{2}(s'_{k}-s'_{k-1})h'(a+\frac{(s'_{k-1}-s_{k})^{2}\|v_{k}+v_{k-1}\|^{2}}{2C_{4}(s_{k}-s_{k-1})^{2}})}{\pi C_{3}}e^{-a}|\\
		=& \frac{\|v_{k}+v_{k-1}\|(s'_{k}-s'_{k-1})(s_{k}-s_{k-1})}{\pi \sqrt{2(a+\frac{(s'_{k-1}-s_{k})^{2}\|v_{k}+v_{k-1}\|^{2}}{2C_{4}(s_{k}-s_{k-1})^{2}})}}e^{-a}\\
		\leq&\frac{(s'_{k}-s'_{k-1})(s_{k}-s_{k-1})^{2}\sqrt{C_{4}}}{\pi (s'_{k-1}-s_{k})}e^{-a}.
	\end{aligned}
\end{equation*}
Here, we will consider the asymptotic behavior when $\|v_{k}+v_{k-1}\|\rightarrow\infty$, so $h(a+p(\|v_{k}+v_{k-1}\|))$ is equivalent to a constat $C_{5}$,  $h'(a+p(\|v_{k}+v_{k-1}\|))$ is equivalent to $C_{6}\|v_{k}+v_{k-1}\|^{-2}$. Taking  $\frac{(s'_{k}-s'_{k-1})(s_{k}-s_{k-1})^{2}\sqrt{C_{4}}}{\pi (s'_{k-1}-s_{k})}e^{-a}$ as the dominating function, we can apply the dominated convergence theorem, 
\begin{equation*}
	\begin{aligned}
		&\lim_{\|v_{k}+v_{k-1}\|\rightarrow\infty}\int_{0}^{\infty}\frac{\|v_{k}+v_{k-1}\|^{2}(s'_{k}-s'_{k-1}-h(a+p))}{\pi C_{3}}h'(a+p)e^{-a}da\\
		=&\int_{0}^{\infty}C_{7}e^{-a}da=C_{7}.
	\end{aligned}
\end{equation*}
So, we can get
\begin{equation*}
	\begin{aligned}
			&\int_{s_{k-1}\leq t_{1}\leq s_{k} \leq s'_{k-1}\leq t_{2} \leq s'_{k}}\frac{1}{2\pi  g(t_{2},t_{1})}\exp\{-\frac{1}{2g(t_{2},t_{1})}\\
		&\sum_{i=1}^{2}(u-\frac{t_{2}(v^{i}_{k'}+v^{i}_{k'-1})}{s'_{k}-s'_{k-1}}+\frac{t_{1}(v^{i}_{k}+v^{i}_{k-1})}{s_{k}-s_{k-1}})^{2}\}dt_{1}dt_{2}\\
		\leq& C_{7}e^{-\frac{(s'_{k-1}-s_{k})^{2}\|v_{k}+v_{k-1}\|^{2}}{2C_{4}(s_{k}-s_{k-1})^{2}}} \|v_{k}+v_{k-1}\|^{-2}, \quad  \|v_{k}+v_{k-1}\|\rightarrow\infty.
	\end{aligned}
\end{equation*}
Thus, we can conclude that when \( t_{1}, t_{2} \) lie in different intervals, they converge to zero faster than when \( t_{1}, t_{2} \) lie in the same interval. Finally, we have the following result,
\begin{equation*}
	\begin{aligned}
		&\E(\int_{0\leq t_{1}< t_{2} \leq1}\delta_{0}(w(t_{2})-w(t_{1}))dt_{1}dt_{2}|w(s_{1})=v_{1},\cdots,w(s_{n})=v_{n})\\
		\sim &\frac{(s^{*})^{2}}{\pi u v^{*}}e^{-\sqrt{2}uv^{*}/s^{*}}, \quad v^{*}\rightarrow\infty,
	\end{aligned}
\end{equation*}
where $k^{*}=\underset{k}{\arg\min}\|v_{k-1}+v_{k}\|$, $v^{*}=\|v_{k^{*}-1}+v_{k^{*}}\|$, $s^{*}=s_{k^{*}}-s_{k^{*}-1}$.
\end{proof}

\section{The random curve driven by the equation with interaction}
In this section, we will consider the motion of a random curve driven by the equation with interaction. First, we introduce the stochastic differential equations with interaction \cite{9}
\begin{equation}\label{2.1}
\begin{aligned}
\left\{
    \begin{array}{lcl}
        d\vec{x}(\vec{u},t)=a(\vec{x}(\vec{u},t),\mu_{t})dt+\int_{\R^{d}}b(\vec{x}(\vec{u},t),\mu_{t},p)w(d\vec{p},dt), \\
        \vec{x}(\vec{u},0)=\vec{u},\quad \vec{u}\in{\R^{d}}, \\
        \mu_{t}=\mu_{0}\circ{\vec{x}(\cdot,t)}^{-1},
      \end{array}
    \right.
\end{aligned}
\end{equation}
where $w(d\vec{p},dt)$ is the Wiener sheet. If one take $\mu_{0}$ as a visitation measure of the curve $\gamma$, the $\mu_{t}$ will be a visitation measure of its image at the moment $t$.
\begin{equation*}
\begin{aligned}
&\mu_{0}(\Delta)=\int_{0}^{1}\mathbf{1}_{\Delta}(\vec{\xi}(s))ds,\\
&\mu_{t}(\Delta)=\int_{0}^{1}\mathbf{1}_{\Delta}(\vec{x}(\vec{\xi}(s),t))ds,\quad \vec{x}(\vec{\xi}(\cdot),t):[0,1]\rightarrow{\R^{d}}.
\end{aligned}
\end{equation*}

The idea of describing the curve's motion using an equation with interaction driven by the curve's visitation measure was proposed in the paper \cite{15} . The coefficients in (\ref{2.1}) satisfy Lipschitz condition with respect to measure-valued argument when the distance between measures is measured by the Wasserstein distance \cite{16} .

\begin{definition}
The Wasserstein distance between the probability measures on $\R^{d}$ is
\begin{equation*}
\begin{aligned}
\rho(\mu,\nu)=\inf_{Q(\mu,\nu)}\int_{\R^{d}}\int_{\R^{d}}\frac{\|\vec{u}-\vec{v}\|}{1+\|\vec{u}-\vec{v}\|}\eta(d\vec{u},d\vec{v}),
\end{aligned}
\end{equation*}
where $\inf$ is taken over the set $Q(\mu,\nu)$ of all probability measures $\eta$ on $\R^{d\times{d}}$, which has $\mu$ and $\nu$ as their margins.
\end{definition}

In this case, the stochastic differential equation (\ref{2.1}) has a unique solution $\mu_{t}$ which is an evolutionary measure-valued Markov process \cite{8} . Specifically, here we present an example of the coefficients in (\ref{2.1}) which satisfies Lipschitz condition with respect to space and measure-valued arguments.

\begin{example}
Suppose that the function
\begin{equation*}
\begin{aligned}
{f}:{\R^{d}}\times{\R^{d}}\rightarrow{\R^{d}},
\end{aligned}
\end{equation*}
satisfies Lipschitz condition with respect to both coordinates and is bounded. Define
\begin{equation*}
\begin{aligned}
{a}(u,\mu)=\int_{\R^{d}}{f}({u},{p})\mu(d{p}),
\end{aligned}
\end{equation*}
there exists a constant $L$, such that
\begin{equation}\label{Ex3.1.1}
\begin{aligned}
\|{a}({u},\mu)-{a}({v},\nu)\|\leq{L}(\|{u}-{v}\|+\rho(\mu,\nu)).
\end{aligned}
\end{equation}
Next, we shortly check the inequality (\ref{Ex3.1.1})
\begin{equation}\label{2.2}
\begin{aligned}
\|{a}({u},\mu)-{a}({v},\nu)\|
=&\|{a}({u},\mu)-{a}({u},\nu)+{a}({u},\nu)-{a}({v},\nu)\|\\
\leq&\|{a}({u},\nu)-{a}({v},\nu)\|+\|{a}({u},\mu)-{a}({u},\nu)\|.
\end{aligned}
\end{equation}
Firstly, there exists a constant $L_{1}$, such that
\begin{equation}\label{Ex3.1.4}
\begin{aligned}
\|{a}({u},\nu)-{a}({v},\nu)\|
=&\|\int_{\R^{d}}{f}({u},{v})\nu(d{v})-\int_{\R^{d}}{f}({v},{v})\nu(d{v})\|\\
\leq&\int_{\R^{d}}L_{1}\|{u}-{v}\|\nu(d{v})=L_{1}\|{u}-{v}\|.
\end{aligned}
\end{equation}
Secondly, taking two random variable $\xi_{1},\xi_{2}$ which have distributions $\mu$ and $\nu$,
\begin{equation*}
\begin{aligned}
&\|{a}({u},\mu)-{a}({u},\nu)\|=\|\mathbb{E}f({u},\xi_{1})-\mathbb{E}f({u},\xi_{2})\|.
\end{aligned}
\end{equation*}
Here we divide into two cases. First, assume $\|\xi_{1}-\xi_{2}\|\leq1$, by the Lipschitz condition, there exists a constant $L_{2}$, such that
\begin{equation*}
\begin{aligned}
\|f({u},\xi_{1})-f({u},\xi_{2})\| \leq{L_{2}}\|\xi_{1}-\xi_{2}\| \leq2{L_{2}}\frac{\|\xi_{1}-\xi_{2}\|}{1+\|\xi_{1}-\xi_{2}\|}.
\end{aligned}
\end{equation*}
Second, assume $\|\xi_{1}-\xi_{2}\|>1$, given the condition that $f$ is bounded, there exists a constant $L_{3}$, such that
\begin{equation*}
\begin{aligned}
&\|f({u},\xi_{1})-f({u},\xi_{2})\|
\leq L_{3}\frac{\|\xi_{1}-\xi_{2}\|}{1+\|\xi_{1}-\xi_{2}\|}.
\end{aligned}
\end{equation*}
Taking the infimum with respect to all possible joint distributions of $(\xi_{1}-\xi_{2})$, we can get there exist a constant $L_{4}$ such that
\begin{equation}\label{2.3}
\begin{aligned}
&\|{a}({u},\mu)-{a}({u},\nu)\|\leq{L_{4}}\rho(\mu,\nu).
\end{aligned}
\end{equation}
Finally, combine the inequalities (\ref{Ex3.1.4}) and (\ref{2.3}), we can get the desired inequality.
\end{example}

Next we consider equation (\ref{2.1}) with the coefficient $a$ of a special form. Suppose that
\begin{equation*}
\begin{aligned}
\R^{d}=\cup_{k=1}^{N}D_{k},
\end{aligned}
\end{equation*}
where the connected domains $D_{k}$ satisfy the condition that
\begin{equation*}
\begin{aligned}
D_{k}^{\circ}\cap D_{j}^{\circ}=\phi,\quad k\neq{j},
\end{aligned}
\end{equation*}
and the domains $D_{k}$ have piecewise smooth borders. For every $k=1,\cdots,N$ take a point $\vec{Z}_{k}\in D_{k}^{\circ}$. We define a function
\begin{equation*}
\begin{aligned}
\vec{V}\in{\mathcal{C}^{1}}(\R^{d};\R^{d}),
\end{aligned}
\end{equation*}
in such a way that
\begin{equation*}
\begin{aligned}
\vec{V}|_{\cup_{k=1}^{N}\partial{D_{k}}}=0,
\end{aligned}
\end{equation*}
 $\vec{V}$ and ${\vec{V}}'$ are both bounded on $\R^{d}$, and for every $\vec{u}\in{D_{k}^{\circ}}$ the solution to Cauchy problem
\begin{equation*}
\begin{aligned}
\left\{
    \begin{array}{lcl}
        d\vec{y}(t)=\vec{V}(\vec{y}(t))dt,\\
        \vec{y}(0)=\vec{u},
      \end{array}
    \right.
\end{aligned}
\end{equation*}
tends to $\vec{Z}_{k}$ when $t\rightarrow+\infty$. Define a non-negative function
\begin{equation*}
\begin{aligned}
h\in{\mathcal{C}^{1}}(\R^{d}),
\end{aligned}
\end{equation*}
in such a way that
\begin{equation*}
\begin{aligned}
h|_{\cup_{k=1}^{N}\partial{D_{k}}}=0,
\end{aligned}
\end{equation*}
$h$ and $h'$ are both bounded, and
\begin{equation*}
\begin{aligned}
h|_{\cup_{k=1}^{N}{D_{k}^{\circ}}}>0.
\end{aligned}
\end{equation*}
Define the function
\begin{equation*}
\begin{aligned}
\vec{f}(\vec{u},\vec{v})=h(\vec{v})\vec{V}(\vec{u}).
\end{aligned}
\end{equation*}
Then we can define the coefficient $\vec{a}$ as follows:
\begin{equation}\label{2.4}
\begin{aligned}
\vec{a}(\vec{u},\mu)=\sum_{k=1}^{N}\mathbf{1}_{D_{k}}(\vec{u})\int_{D_{k}}\vec{f}(\vec{u},\vec{v})\mu(d\vec{v}).
\end{aligned}
\end{equation}

Coefficient $\vec{a}$ is constructed to describe the motion of particles in a medium with attractors $\vec{Z}$. The next theorem shows that, starting from any mass distribution, the measure $\mu_{t}$ will be concentrated in the points $\{\vec{Z}_{k}\}^{N}_{k=1}$, as $t\rightarrow\infty$.

\begin{theorem}\label{thm2.2}
Consider equation
\begin{equation*}
\begin{aligned}
\left\{
    \begin{array}{lcl}
        d\vec{x}(\vec{u},t)=\vec{a}(\vec{x}(\vec{u},t),\mu_{t})dt,\\
        \vec{x}(\vec{u},0)=\vec{u},\quad \vec{u}\in{\R^{d}},\\
        \mu_{t}=\mu_{0}\circ{\vec{x}(\cdot,t)}^{-1},
      \end{array}
    \right.
\end{aligned}
\end{equation*}
with $\vec{a}$ of the form (\ref{2.4}). Define
\begin{equation*}
\begin{aligned}
p_{k}=\mu_{0}(D_{k}), \quad k=1,\cdots,N, \quad \nu=\sum_{k=1}^{N}p_{k}\delta_{\vec{Z}_{k}}.\\
\end{aligned}
\end{equation*}
Then
\begin{equation*}
\begin{aligned}
\rho(\mu_{t},\nu)\rightarrow0, \quad t\rightarrow+\infty.
\end{aligned}
\end{equation*}
\end{theorem}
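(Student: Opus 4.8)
The plan is to treat each attractor $\vec{Z}_k$ separately, reduce the convergence $\rho(\mu_t,\nu)\to 0$ to the pointwise convergence of trajectories $\vec{x}(\vec{u},t)\to\vec{Z}_k$ for $\mu_0$-almost every $\vec{u}\in D_k$, and then upgrade this to convergence in $\rho$ by a coupling argument. Throughout I would use that $\mu_0$ assigns no mass to the piecewise smooth (hence Lebesgue-null) boundaries $\cup_k\partial D_k$, which holds for the visitation densities considered here; consequently $p_k=\mu_0(D_k^\circ)$, and the null set of mass sitting exactly on $\cup_k\partial D_k$ never moves and never contributes to the drift, since both $\vec{V}$ and $h$ vanish there. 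If $p_k=0$ the corresponding statement is vacuous, so I may assume $p_k>0$.

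First I would record a time-change representation. Fix $\vec{u}\in D_k^\circ$ and set $c_k(t)=\int_{D_k}h(\vec{v})\,\mu_t(d\vec{v})\geq 0$, which is continuous in $t$ by dominated convergence. As long as $\vec{x}(\vec{u},t)$ stays in $D_k^\circ$, the equation reduces to the autonomous field scaled by this scalar, $\tfrac{d}{dt}\vec{x}(\vec{u},t)=c_k(t)\,\vec{V}(\vec{x}(\vec{u},t))$. Writing $\vec{y}_{\vec{u}}(\cdot)$ for the solution of $\dot{\vec{y}}=\vec{V}(\vec{y})$, $\vec{y}(0)=\vec{u}$, and $\tau_k(t)=\int_0^t c_k(s)\,ds$ for the accumulated time change, I would verify directly that $t\mapsto\vec{y}_{\vec{u}}(\tau_k(t))$ solves the same equation with the same initial datum, so by uniqueness (here $\vec{V}\in\mathcal{C}^1$ with bounded derivative, hence globally Lipschitz) $\vec{x}(\vec{u},t)=\vec{y}_{\vec{u}}(\tau_k(t))$. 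Since $\vec{V}=0$ on $\partial D_k$, uniqueness of the autonomous flow makes $\partial D_k$ invariant, so $\vec{y}_{\vec{u}}(s)\in D_k^\circ$ for all $s\geq 0$; the representation is therefore valid for all $t$, and each domain is forward invariant for $\vec{x}(\cdot,t)$.

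The heart of the argument is to prove that the time change diverges, $\tau_k(t)\to\infty$ as $t\to\infty$. I would argue by contradiction: if $\tau_k(t)\uparrow T<\infty$, then for every $\vec{u}\in D_k^\circ$ one has $\vec{x}(\vec{u},t)\to\vec{y}_{\vec{u}}(T)\in D_k^\circ$, a point at which $h>0$. Using that $h$ is bounded, dominated convergence gives $c_k(t)\to\int_{D_k}h(\vec{y}_{\vec{u}}(T))\,\mu_0(d\vec{u})>0$, which forces $\int_0^\infty c_k(s)\,ds=\infty$ and contradicts $T<\infty$. Hence $\tau_k(t)\to\infty$, and by the hypothesis on the autonomous flow $\vec{x}(\vec{u},t)=\vec{y}_{\vec{u}}(\tau_k(t))\to\vec{Z}_k$ for every $\vec{u}\in D_k^\circ$. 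This is the step where the feedback structure genuinely matters, and I expect it to be the main obstacle, since one must rule out the transported mass ``freezing'' in the interior before reaching the attractor; the truncation $h|_{\partial D_k}=0$ together with $h>0$ on $D_k^\circ$ is exactly what drives the self-sustaining lower bound on $c_k$.

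Finally I would pass from pointwise convergence of trajectories to convergence in the Wasserstein metric. The map $\Phi$ sending $\vec{u}\in D_k^\circ$ to $\vec{Z}_k$ pushes $\mu_0$ forward to $\nu$, so the law of $(\vec{x}(\vec{u},t),\Phi(\vec{u}))$ under $\mu_0$ is a coupling of $\mu_t$ and $\nu$. Consequently
\begin{equation*}
\begin{aligned}
\rho(\mu_t,\nu)\leq\int_{\R^d}\frac{\|\vec{x}(\vec{u},t)-\Phi(\vec{u})\|}{1+\|\vec{x}(\vec{u},t)-\Phi(\vec{u})\|}\,\mu_0(d\vec{u}).
\end{aligned}
\end{equation*}
The integrand is bounded by $1$ and tends to $0$ for $\mu_0$-almost every $\vec{u}$ by the previous step, so dominated convergence yields $\rho(\mu_t,\nu)\to 0$. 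The truncated form of $\rho$ is precisely what makes this last limit immediate, since it furnishes an integrable dominating function with no moment assumptions on $\mu_0$.
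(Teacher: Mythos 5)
Your proposal is correct, and it reaches the conclusion by the same overall skeleton as the paper (pointwise convergence of trajectories to the attractors, then integration against $\mu_0$), but it differs in substance at the two places where the paper is thinnest. The paper's proof is very short: it reduces Wasserstein convergence to weak convergence by citing that $\rho$ metrizes weak convergence, writes $\int\psi\,d\mu_t=\int\psi(\vec{x}(\vec{u},t))\,\mu_0(d\vec{u})$, and then simply asserts ``by the construction of the coefficient $\vec{a}$'' that $\vec{x}(\vec{u},t)\rightarrow\vec{Z}_{k}$ for $\vec{u}\in D_{k}^{\circ}$. That assertion is exactly the nontrivial point, since the drift is $c_k(t)\vec{V}(\vec{x})$ with $c_k(t)=\int_{D_k}h\,d\mu_t$ depending on the solution itself, and one must rule out the time change freezing; your time-change representation $\vec{x}(\vec{u},t)=\vec{y}_{\vec{u}}(\tau_k(t))$ together with the contradiction argument ($\tau_k(t)\uparrow T<\infty$ would force $c_k(t)$ toward a strictly positive limit, making $\tau_k$ diverge) is precisely the missing lemma, and it is sound: forward invariance of $D_k^{\circ}$ follows from backward uniqueness at the boundary equilibria ($\vec{V}=0$ on $\partial D_k$), and the positivity of the limit of $c_k$ uses $h>0$ on $D_k^{\circ}$ and $p_k>0$. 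You also correctly note that when $p_k=0$ the pointwise claim can fail (then $c_k\equiv 0$ and mass in $D_k$ would not move), but the conclusion is unaffected since no mass lives there --- a case the paper's blanket assertion silently gets wrong. Your final step replaces the metrization citation by an explicit coupling of $\mu_t$ and $\nu$ through the map $\vec{u}\mapsto(\vec{x}(\vec{u},t),\vec{Z}_{k})$ plus dominated convergence; this is self-contained and equivalent in strength. The one hypothesis you add, $\mu_0(\cup_k\partial D_k)=0$, is not stated in the theorem but is genuinely needed for the statement to be true as written (boundary mass is stationary, and otherwise $\sum_k p_k$ could even exceed $1$); the paper's own proof implicitly makes the same assumption, and the paper imposes it explicitly in the subsequent theorem, so flagging it is an improvement rather than a deviation.
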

\begin{proof}
To check the convergence in Wasserstein distance, we just need to prove that $\mu_{t}$ weakly converges to $\nu$ (see \cite{18}). For a function $\psi\in{\mathcal{C}_{b}}(\R^{d})$
\begin{equation}\label{2.5}
\begin{aligned}
\int_{\R^{d}}\psi(\vec{v})\mu_{t}(d\vec{v})=\int_{\R^{d}}\psi(\vec{x}(\vec{u},t))\mu_{0}(d\vec{u}).
\end{aligned}
\end{equation}
By the construction of the coefficient $\vec{a}$,
\begin{equation*}
\begin{aligned}
\vec{x}(\vec{u},t)\rightarrow\vec{Z}_{k},\quad {\rm when}\quad t\rightarrow+\infty, \quad \vec{u}\in{D}^{\circ}_{k},
\end{aligned}
\end{equation*}
so we have
\begin{equation*}
\begin{aligned}
\int_{\R^{d}}\psi(\vec{x}(\vec{u},t))\mu_{0}(d\vec{u})\rightarrow\int_{\R^{d}}\psi(\vec{Z}_{k})\mathbf{1}_{{D}^{\circ}_{k}}(\vec{u})\mu_{0}(d\vec{u}),\quad t\rightarrow+\infty,
\end{aligned}
\end{equation*}
which shows that $\mu_{t}$ is weakly converging to $\nu$.
\end{proof}

Next, we present a theorem about the random curve in a flow with interaction with attractors $\vec{Z}_{1},\cdots,\vec{Z}_{N}$. Specifically, we investigate the probability of a random curve having at least of type $\alpha(1),\cdots,\alpha(n)$ up to $\varepsilon$. Furthermore, we will show that this probability is less than or equal to the lower limit of itself at infinity.

\begin{theorem}
Let $\gamma_{0}(r),r\in[0,1]$ the initial random curve in $\R^{d}$, denote $\gamma_{t}=\vec{x}(\gamma_{0},t)$, where $\vec{x}$ is a solution to the equation with interaction. Let the coefficient $\vec{a}$ of the form (\ref{2.4}) with some fixed points $\vec{Z_{1}},\cdots,\vec{Z_{N}}$. Let $\varepsilon$ be sufficiently small such that
\begin{equation*}
\begin{aligned}
\forall{j}=1,\cdots,N, \quad B(\vec{Z_{k}},\varepsilon)\subset{D_{k}^{\circ}}.
\end{aligned}
\end{equation*}
We define the following probability function
\begin{equation*}
\begin{aligned}
\forall{t}\geq0 \quad R_{t}=P\{\gamma_{t} \ {\rm has} \ {\rm at} \ {\rm least}  \ {\rm type} \ \alpha(1),\cdots,\alpha(n) \ {\rm up} \ {\rm to} \ \varepsilon\}, \quad n\leq{N}.
\end{aligned}
\end{equation*}
This function $R_{t}$ has the property
\begin{equation*}
	\begin{aligned}
		\lim_{t\rightarrow+\infty}R_{t}=\sup_{s\geq 0}R_{s}.
	\end{aligned}
\end{equation*}
\end{theorem}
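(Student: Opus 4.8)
The plan is to squeeze $\lim_{t\to\infty}R_t$ between $\sup_{s\ge0}R_s$ and the probability of a single ``limiting'' event. Write $A_t=\{\gamma_t\text{ has at least type }\alpha(1),\dots,\alpha(n)\text{ up to }\varepsilon\}$, so $R_t=P(A_t)$, and introduce
\[
A_\infty=\{\exists\,0\le t_1<\cdots<t_n\le1:\ \gamma_0(t_k)\in D_{\alpha(k)}^{\circ},\ k=1,\dots,n\}.
\]
Unwinding the definitions, $\omega\in A_t$ means exactly that there are times $t_1<\cdots<t_n$ and centers $\vec u_k\in B(\vec Z_{\alpha(k)},\varepsilon)$ with $\gamma_t(t_k)\in B(\vec u_k,\varepsilon)$, i.e.\ the trajectory $\gamma_t$ passes within $2\varepsilon$ of each $\vec Z_{\alpha(k)}$ in the prescribed order. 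I will prove the chain
\[
\limsup_{t\to\infty}R_t\ \le\ \sup_{s\ge0}R_s\ \le\ P(A_\infty)\ \le\ \liminf_{t\to\infty}R_t,
\]
which forces all four quantities to agree and yields the claim. The first inequality is immediate since $R_t\le\sup_{s\ge0}R_s$ for every $t$.

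The crucial structural input is that each domain $D_k$ is invariant under the flow $\vec x(\cdot,t)$. Indeed, on $D_k$ the coefficient reduces to $\vec a(\vec u,\mu_t)=\big(\int_{D_k}h\,d\mu_t\big)\vec V(\vec u)=:c_k(t)\vec V(\vec u)$ with $c_k(t)\ge0$, and $\vec V$ vanishes on $\partial D_k$; hence every boundary point is an equilibrium of the (non-autonomous) ODE, and by uniqueness of solutions a trajectory started in $D_k^{\circ}$ can never reach $\partial D_k$. Consequently $\gamma_0(r)\in D_k^{\circ}$ if and only if $\gamma_s(r)=\vec x(\gamma_0(r),s)\in D_k^{\circ}$ for every $s\ge0$. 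This gives the second inequality: taking $\varepsilon$ small enough that $B(\vec Z_k,2\varepsilon)\subset D_k^{\circ}$ for all $k$, if $\omega\in A_s$ then the witnessing times satisfy $\gamma_s(t_k)\in B(\vec Z_{\alpha(k)},2\varepsilon)\subset D_{\alpha(k)}^{\circ}$, whence by invariance $\gamma_0(t_k)\in D_{\alpha(k)}^{\circ}$, so $\omega\in A_\infty$. Thus $A_s\subseteq A_\infty$ and $R_s=P(A_s)\le P(A_\infty)$ for all $s$.

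For the third inequality I use the convergence $\vec x(\vec u,t)\to\vec Z_k$ as $t\to\infty$ for $\vec u\in D_k^{\circ}$, established in the proof of Theorem \ref{thm2.2}. If $\omega\in A_\infty$, fix witnessing times $t_1<\cdots<t_n$ with $\gamma_0(t_k)\in D_{\alpha(k)}^{\circ}$; then $\gamma_t(t_k)\to\vec Z_{\alpha(k)}$, so there is $T(\omega)$ such that $\gamma_t(t_k)\in B(\vec Z_{\alpha(k)},\varepsilon)$ simultaneously for all $k$ once $t\ge T(\omega)$. Choosing $\vec u_k=\gamma_t(t_k)$ then exhibits a polygonal line of type $\alpha$ inscribed up to $\varepsilon$ in $\gamma_t$, i.e.\ $\omega\in A_t$ for all $t\ge T(\omega)$. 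Hence $\liminf_{t\to\infty}\mathbf 1_{A_t}\ge\mathbf 1_{A_\infty}$ pointwise, and applying Fatou's lemma along any sequence $t_m\to\infty$ gives $\liminf_{t\to\infty}R_t\ge P(A_\infty)$, closing the chain.

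The main obstacle is precisely the second inequality, where one must rule out the possibility that $\gamma_t$ sits near an attractor $\vec Z_{\alpha(k)}$ without the initial curve ever having entered $D_{\alpha(k)}^{\circ}$; the invariance of the $D_k$, which comes from the vanishing of $\vec V$ on the boundaries together with uniqueness for the ODE, is exactly what excludes this. A minor technical point to record is the factor of two in $\varepsilon$ created by composing ``type up to $\varepsilon$'' with ``inscribed up to $\varepsilon$'', which is harmless once $\varepsilon$ is chosen so that the $2\varepsilon$-balls still lie inside the open domains. Measurability of $A_t$ and $A_\infty$, needed for Fatou, follows from continuity of $\gamma_0$ and the regularity of the domain boundaries.
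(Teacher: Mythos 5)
Your proof is correct, and although it relies on the same dynamical mechanism as the paper's --- points of the curve lying in $D_{k}^{\circ}$ converge to $\vec{Z}_{k}$, so membership in the type event persists for all large times --- the logical organization is genuinely different. The paper never introduces your event $A_{\infty}$: for each fixed $t$ it shows that every $\omega\in A_{t}$ re-enters $A_{s}$ for all $s$ beyond a finite random time $\tau(\omega)$, passes to the limit via the increasing sets $B_{s}=A_{t}\cap\{\omega:s\geq\tau(\omega)\}\uparrow A_{t}$, and concludes $P(A_{t})\leq\varliminf_{s\rightarrow+\infty}P(A_{s})$ for every $t$, which together with the trivial bound $\varlimsup_{s}R_{s}\leq\sup_{t}R_{t}$ yields the claim. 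Your sandwich $A_{s}\subseteq A_{\infty}\subseteq\varliminf_{t\rightarrow+\infty}A_{t}$ needs one ingredient absent from the paper, namely backward invariance of the domains ($\gamma_{s}(r)\in D_{k}^{\circ}$ forces $\gamma_{0}(r)\in D_{k}^{\circ}$), which you correctly derive from the vanishing of $\vec{V}$ on the boundaries together with uniqueness for the ODE; in exchange it buys a strictly stronger conclusion, the explicit identification $\lim_{t\rightarrow+\infty}R_{t}=P(A_{\infty})$ in terms of the initial curve alone, with Fatou's lemma replacing the stopping-time construction. Conversely, the paper's route needs the restart-at-time-$t$ version of the convergence to the attractors (i.e.\ the flow/Markov property of the equation with interaction), which your passage through time $0$ avoids. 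Two caveats are worth recording. First, you unwind the definitions faithfully --- ``type up to $\varepsilon$'' inscribed ``up to $\varepsilon$'' means passing within $2\varepsilon$ of each $\vec{Z}_{\alpha(k)}$ --- and therefore need $B(\vec{Z}_{k},2\varepsilon)\subset D_{k}^{\circ}$, slightly stronger than the stated hypothesis; the paper instead silently identifies the event of the theorem with the smaller event $\{\exists\, r_{1}<\cdots<r_{n}:\gamma_{t}(r_{i})\in B(\vec{Z}_{\alpha(i)},\varepsilon)\}$ and proves the statement under that reinterpretation, so each proof repairs the same mismatch in a different place, yours explicitly. Second, both proofs import from Theorem~\ref{thm2.2} the assertion that $\vec{x}(\vec{u},t)\rightarrow\vec{Z}_{k}$ for $\vec{u}\in D_{k}^{\circ}$ (which tacitly requires $\int_{D_{k}}h\,d\mu_{t}$ not to degenerate to $0$), so neither argument is more exposed on that point.
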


\begin{proof}
Define the set $A_{t}=\{\omega:\exists{r_{1}}<\cdots<r_{n}\in[0,1]:\gamma_{t}(r_{i})(\omega)\in{B}(\vec{Z}_{\alpha(i)},\varepsilon)\}$, since $\gamma_{0}$ is a random curve, which means that there exists parametrization of $\gamma_{0}$,
$$\gamma_{0}(r)(\omega), \quad r\in[0,1], \quad \gamma_{0}\in{\mathcal{C}}([0,1],\R^{d}),$$
is a random continuous function. It is known that $\vec{x}$ has a continuous modification with respect to both variables $u$ and $t$. This means that $\vec{x}$ is a random element in ${\mathcal{C}}(\R^{d}\times[0,+\infty],\R^{3})$. For fixed $r$, $\gamma_{0}(r)$ is a random variable, by the known result in \cite{20} that $\vec{x}(\gamma_{0}(r),\cdot)$ is also a random variable. Since $\gamma_{0}(\cdot)$ is continuous and $\vec{x}$ has a continuous modification, $\vec{x}(\gamma_{0}(\cdot),\cdot)$ is a random element in $\mathcal{C}([0,1])$. Fix a $\omega\in{A}_{t}$, there exist $r_{1},\cdots,r_{n}$:
$$\gamma_{s}(r_{i})(\omega)\in{B}(\vec{Z}_{\alpha(i)},\varepsilon),\quad i=1,\cdots,n,$$
By the construction of the coefficient $\vec{a}$,
$$\forall\vec{u}\in{D}^{\circ}_{k},\quad \vec{x}(\vec{u},t)\rightarrow\vec{Z}_{k},\quad t\rightarrow+\infty,\quad k=1,\cdots,N,$$
we can get
$$\gamma_{t}(r_{i})(\omega)\rightarrow\vec{Z}_{k},\quad t\rightarrow+\infty,\quad i=1,\cdots,n.$$
There exists $\tau(\omega)<+\infty$, such that
$$\tau(\omega)=\inf\{s:\forall{s'}\geq{s}\quad \forall{i}=1,\cdots,n\quad \gamma_{s'}(r_{i})(\omega)\in{B}(Z_{\alpha(i)},\varepsilon)\},$$
and $\tau(\omega)$ is a random variable. For any $t>\tau(\omega)$, $\gamma_{t}(\omega)$ has at least of type $\alpha(1),\cdots,\alpha(n)$. We choose $\tau(\omega)$ in a measurable way
$$\tau(\omega)=
    \left\{
      \begin{array}{lcl}
        0, & \omega\notin{A}_{t}, \\
        \tau(\omega), & \omega\in{A}_{t}.
      \end{array}
    \right.$$
Then for all ${s}>t$, denote
$$B_{s}=A_{t}\cap\{\omega:{s}\geq\tau(\omega)\},$$
we have $B_{s}\subset{A}_{s}$, so we have the following relation
\begin{equation*}
\begin{aligned}
R_{s}=P(A_{s})\geq{P}(B_{s}).
\end{aligned}
\end{equation*}
Since
$$\forall\omega\in{A}_{t},\tau(\omega)<+\infty,$$
by the construction of $B_{s}$, we have
\begin{equation*}
	\begin{aligned}
		B_{s}\rightarrow A_{t}, \quad s\rightarrow+\infty.
	\end{aligned}
\end{equation*}
From this, we can get
$$P(B_{s})\rightarrow{P}(A_{t}), \quad s\rightarrow+\infty.$$
Consequently,
\begin{equation*}
	\begin{aligned}
		P(A_{t})=\lim_{s\rightarrow+\infty}P(B_{s})\leq\varliminf_{s\rightarrow+\infty}P(A_{s}),
\end{aligned}
\end{equation*}
which implies
\begin{equation*}
	\begin{aligned}
		\lim_{t\rightarrow+\infty}R_{t}=\sup_{s\geq 0}R_{s}.
	\end{aligned}
\end{equation*}
\end{proof}

In Theorem \ref{thm2.2} we demonstrate that distribution of mass tends to attractor points of the differential equation with interaction. Subsequently, using this result, we will investigate the visitation measure of the random curve in a flow with interaction.

\begin{theorem}
Consider the equation with interaction with coefficient $\vec{a}$ in (\ref{2.4}). Assume the initial random curve $\{\gamma_{0}(r);r\in[0,1]\}$ has at least type $\alpha(1),\cdots,\alpha(n)$. Consider the random curve $\gamma_{t}=\vec{x}(\gamma_{0},t)$, where $\vec{x}$ is a solution to the equation with interaction. Suppose that the visitation measure $\mu^{k}_{t}$ of $\gamma_{t}$ has density $l^{k}_{t}$ with respect to the Lebesgue measure. Suppose that at initial time $t=0$,
\begin{equation*}
\begin{aligned}
\mu^{1}_{0}(\cup_{i=1}^{k}\partial{D_{i}})=0.
\end{aligned}
\end{equation*}
Denote
\begin{equation*}
\begin{aligned}
I_{1}=B(Z_{\alpha(2)}-Z_{\alpha(1)},\delta),\cdots,I_{k-1}=B(Z_{\alpha(k)}-Z_{\alpha(k-1)},\delta).
\end{aligned}
\end{equation*}
Then for every $\delta>0$ we have
\begin{equation}\label{2.23}
\begin{aligned}
\int_{\R^{d}\backslash{I_{1}}\times\cdots\times{\R}^{d}\backslash{I_{k-1}}}l_{t}^{k}(v_{1},\cdots,v_{k-1})dv_{1},\cdots,dv_{k-1}\rightarrow0,\quad a.s. \quad t\rightarrow+\infty, 
\end{aligned}
\end{equation}
\end{theorem}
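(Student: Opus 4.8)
The plan is to read the left-hand side of (\ref{2.23}) as the $\mu_{t}^{k}$-mass of the rectangle $G=(\R^{d}\setminus I_{1})\times\cdots\times(\R^{d}\setminus I_{k-1})$ and to show that the flow sweeps all of this mass onto the (discrete) set of differences of the attractors as $t\to+\infty$. Since $\gamma_{t}=\vec{x}(\gamma_{0},t)$ and $G$ is a product set, writing the $k$-fold visitation measure through its parametrisation gives
\begin{equation*}
\int_{G}l_{t}^{k}(v_{1},\dots,v_{k-1})\,dv=\int_{[0,1]^{k}}\prod_{j=1}^{k-1}\mathbf{1}_{\R^{d}\setminus I_{j}}\bigl(\gamma_{t}(r_{j+1})-\gamma_{t}(r_{j})\bigr)\,dr_{1}\cdots dr_{k},
\end{equation*}
so that the density disappears from the problem and only the long-time behaviour of the trajectories $r\mapsto\gamma_{t}(r)$ remains to be controlled.

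Next I would fix a realisation $\omega$ and exploit the hypothesis $\mu_{0}^{1}(\cup_{i}\partial D_{i})=0$. By the very definition of $\mu_{0}^{1}$ this says that Lebesgue-almost every $r\in[0,1]$ has $\gamma_{0}(r)\in D_{m(r)}^{\circ}$ for a unique index $m(r)$; for such $r$ the construction of $\vec{a}$ in (\ref{2.4}) makes the characteristic $t\mapsto\vec{x}(\gamma_{0}(r),t)$ follow the field $\vec{V}$ inside $D_{m(r)}$, so that $\gamma_{t}(r)\to\vec{Z}_{m(r)}$, exactly as in the proof of Theorem \ref{thm2.2}. Hence for $\lambda^{k}$-almost every tuple $(r_{1},\dots,r_{k})$ the increment $\gamma_{t}(r_{j+1})-\gamma_{t}(r_{j})$ converges to $\vec{Z}_{m(r_{j+1})}-\vec{Z}_{m(r_{j})}$. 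As the integrand is bounded by $1$ and each $\mathbf{1}_{\R^{d}\setminus I_{j}}$ is continuous at the limit off the $\lambda^{k}$-null set of tuples whose limiting increment lands on a sphere $\partial I_{j}$, the dominated convergence theorem lets me pass the limit inside and identify
\begin{equation*}
\lim_{t\to+\infty}\int_{G}l_{t}^{k}\,dv=\int_{[0,1]^{k}}\prod_{j=1}^{k-1}\mathbf{1}_{\R^{d}\setminus I_{j}}\bigl(\vec{Z}_{m(r_{j+1})}-\vec{Z}_{m(r_{j})}\bigr)\,dr_{1}\cdots dr_{k}.
\end{equation*}

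The genuinely hard step is to see that this limit equals $0$, i.e.\ that the pushforward of $\lambda^{k}$ under the limiting difference map charges no point of $G$. This is where the standing hypothesis that $\gamma_{0}$ has at least type $\alpha(1),\dots,\alpha(n)$ must be brought to bear, forcing the surviving index assignment $m(r_{j})$ to reproduce the pattern $\alpha(j)$ on a set of full measure, together with a choice of $\delta$ small enough that the prescribed differences $\vec{Z}_{\alpha(j+1)}-\vec{Z}_{\alpha(j)}$ are separated from every other difference of attractors. I expect this concentration-onto-the-atoms step — rather than the routine dominated-convergence passage — to be the main obstacle, and it is the step I would scrutinise most carefully, since a priori a tuple for which some $m(r_{j})\neq\alpha(j)$ could still contribute to the limiting integral and would have to be shown negligible.
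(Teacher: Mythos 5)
Your reduction follows the same route as the paper's own proof: rewrite the left-hand side of (\ref{2.23}) as the $\mu^{k}_{t}$-mass of the product set, use $\mu^{1}_{0}(\cup_{i}\partial D_{i})=0$ together with the mechanism of Theorem \ref{thm2.2} to get $\gamma_{t}(r)\rightarrow\vec{Z}_{m(r)}$ for Lebesgue-a.e.\ $r$, and pass to the limit by dominated convergence (your sphere-boundary caveat is harmless: the limiting increments take only finitely many values, so all but finitely many $\delta$ avoid the spheres, and monotonicity in $\delta$ then covers every $\delta$). But the proposal is not a proof: the step you defer is the entire content of the theorem. What your setup yields is that the limit equals
\begin{equation*}
\sum_{j_{1},\dots,j_{k}}p_{j_{1}}\cdots p_{j_{k}}\prod_{i=1}^{k-1}\mathbf{1}_{\R^{d}\setminus I_{i}}\bigl(\vec{Z}_{j_{i+1}}-\vec{Z}_{j_{i}}\bigr),\qquad p_{j}=\lambda\{r:\gamma_{0}(r)\in D_{j}^{\circ}\},
\end{equation*}
and you leave open why this vanishes.

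Moreover, your suspicion about that step is justified: it cannot be closed from the stated hypotheses. Having at least type $\alpha(1),\dots,\alpha(n)$ up to $\varepsilon$ only asserts the \emph{existence} of parameters $r_{1}<\cdots<r_{n}$ with $\gamma_{0}(r_{i})\in B(\vec{Z}_{\alpha(i)},\varepsilon)$; it constrains $m(r)$ at nothing but those parameters, so it cannot force the pattern $\alpha$ to appear $\lambda^{k}$-a.e. Worse, it actually produces a positive contribution to the sum above: since $B(\vec{Z}_{\alpha(1)},\varepsilon)\subset D^{\circ}_{\alpha(1)}$ and $\gamma_{0}$ is continuous, the set $\{r:\gamma_{0}(r)\in D^{\circ}_{\alpha(1)}\}$ is open and nonempty, so $p_{\alpha(1)}>0$; the diagonal tuple $j_{1}=\cdots=j_{k}=\alpha(1)$ then has all its increments equal to $0$, which lies outside every $I_{i}$ as soon as $\delta<\min_{i}\|\vec{Z}_{\alpha(i+1)}-\vec{Z}_{\alpha(i)}\|$ and consecutive labels are distinct. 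Hence the limit you identified is at least $p_{\alpha(1)}^{k}>0$ for small $\delta$. The paper's own proof does not overcome this: it simply asserts, citing Theorem \ref{thm2.2}, that $\mu^{k}_{t}$ ``concentrates on $B(Z_{\alpha(1)},\delta),\dots,B(Z_{\alpha(k)},\delta)$'', which ignores exactly these same-domain tuples. So the obstacle you flagged is genuine, it is precisely the gap in the published argument, and repairing it requires changing the hypotheses or the statement (for instance, assuming $\mu^{1}_{0}$ is carried by the domains $D_{\alpha(i)}$ in the prescribed order, or enlarging the excluded sets $I_{i}$ to neighborhoods of \emph{all} differences $\vec{Z}_{j}-\vec{Z}_{j'}$ of attractors, including $0$), not a finer passage to the limit.
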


\begin{proof}
First, consider the case $k=2$, consider the visitation measure for set $\{\R^{d}\backslash{I}_{1}\}$
\begin{equation*}
\begin{aligned}
\mu_{t}^{2}(\R^{d}\backslash{I}_{1})=\int_{\R^{d}\backslash{I_{1}}}l_{t}^{2}(v_{1})dv_{1}=\int_{0}^{1}\int_{0}^{1}\mathbf{1}_{\R^{d}\backslash{I_{1}}}(\vec{\xi}(t_{2})-\vec{\xi}(t_{1}))dt_{1}dt_{2},
\end{aligned}
\end{equation*}
If $(\vec{\xi}(t_{2})-\vec{\xi}(t_{1}))\in{I_{1}}$, then
\begin{equation*}
\begin{aligned}
\|(\vec{\xi}(t_{2})-Z_{\alpha(2)})-(\vec{\xi}(t_{1})-Z_{\alpha(1)})\|\leq\delta,
\end{aligned}
\end{equation*}
which exactly means the visitation measure concentrate on $B(Z_{\alpha(1)},\delta)$ and $B(Z_{\alpha(2)},\delta)$.
Consider the general case,
\begin{equation}\label{Th3.3.1}
\begin{aligned}
&\lim_{t\rightarrow+\infty}\int_{\R^{d}\backslash{I_{1}}\times\cdots\times{\R}^{d}\backslash{I_{k-1}}}l_{t}^{k}(v_{1},\cdots,v_{k-1})dv_{1},\cdots,dv_{k-1}\\
=&\lim_{t\rightarrow+\infty}\mu^{k}_{t}(\cup_{i=1}^{k-1}{\R^{d}\backslash{I}_{i}}).
\end{aligned}
\end{equation}
From Theorem \ref{thm2.2} we can get that the visitation measure $\mu^{k}$ concentrate on $B(Z_{\alpha(1)},\delta),\cdots,B(Z_{\alpha(k)},\delta)$, when $t\rightarrow+\infty$. Thus the limit of (\ref{Th3.3.1}) equals to $0$ almost surely.
\end{proof}

We described the limit behavior of a visitation measure of random curve which has at least of type $\alpha(1),\cdots,\alpha(n)$ in a flow with attractor points. To illustrate this result, we now present an example of the equation with interaction with the coefficient $\vec{a}$ in the form of (\ref{2.4}), and investigate the visitation measure of its solution.

\begin{figure}[pb]
\centerline{\includegraphics[width=8cm]{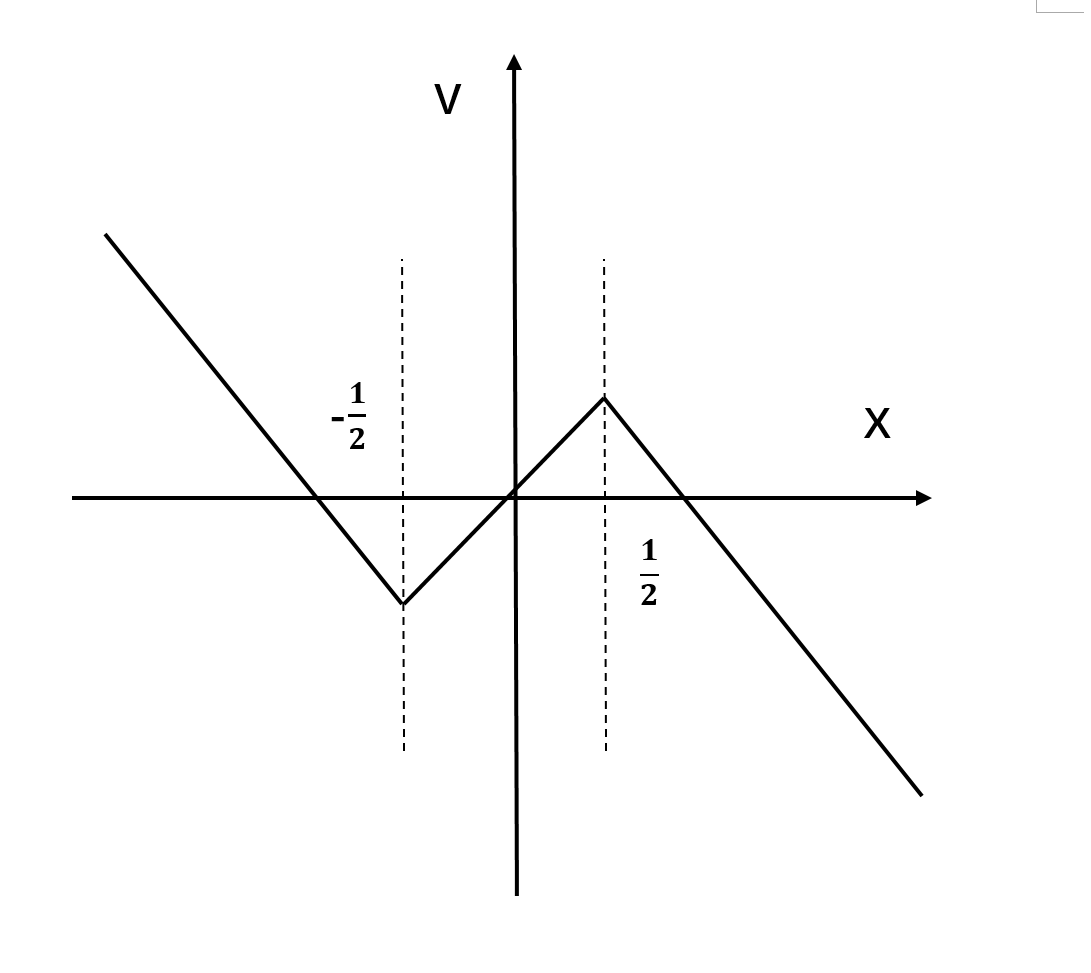}}
\vspace*{8pt}
\caption{$V$ in one dimension.}
\end{figure}

\begin{example}
When $N=4$, consider the equation with interaction with the coefficient $\vec{a}$ of the form (\ref{2.4}) in $\R^{2}$ . First, we present the graph of the function $V$ which is in one dimensional case. Then, we derive the form of $V$ which is in the two-dimensional case, and the resulting function satisfies the Lipschitz condition with respect to both coordinates. Define the function
\begin{equation*}
\begin{aligned}
{V}(x,y)=\left\{
    \begin{array}{lcl}
       (1-x,1-y), \quad (x,y)=(\frac{1}{2},\infty)\times(\frac{1}{2},\infty),\\
       (1-x,-1-y), \quad (x,y)=(\frac{1}{2},\infty)\times(-\infty,-\frac{1}{2}),\\
       (-1-x,1-y), \quad (x,y)=(-\infty,-\frac{1}{2})\times(\frac{1}{2},\infty),\\
       (-1-x,-1-y), \quad (x,y)=(-\infty,-\frac{1}{2})\times(-\infty,-\frac{1}{2}),\\
       (x,y), \quad (x,y)=[-\frac{1}{2},\frac{1}{2}]\times[-\frac{1}{2},\frac{1}{2}],\\
       (x,1-y), \quad (x,y)=[-\frac{1}{2},\frac{1}{2}]\times(\frac{1}{2},\infty),\\
       (x,-1-y), \quad (x,y)=[-\frac{1}{2},\frac{1}{2}]\times(-\infty,-\frac{1}{2}),\\
       (1-x,y), \quad (x,y)=(\frac{1}{2},\infty)\times[-\frac{1}{2},\frac{1}{2}],\\
       (-1-x,y), \quad (x,y)=(-\infty,-\frac{1}{2})\times[-\frac{1}{2},\frac{1}{2}].\\
      \end{array}
   \right.
\end{aligned}
\end{equation*}
The attractors are  ${Z}_{1}=(1,1),{Z}_{2}=(1,-1),{Z}_{3}=(-1,1),{Z}_{4}=(-1,-1)$, and $D_{1}=[0,\infty)\times[0,\infty),D_{2}=[0,\infty)\times(-\infty,0],D_{3}=(-\infty,0]\times[0,\infty),D_{4}=(-\infty,0]\times(-\infty,0]$.
For every ${u}\in{D_{k}^{\circ}}$, the solution ${x}(t)$ to Cauchy problem
\begin{equation*}
\begin{aligned}
\left\{
    \begin{array}{lcl}
        d{x}(t)={V}({x}(t))dt,\\
        {x}(0)=(u_{1},u_{2}),
      \end{array}
    \right.
\end{aligned}
\end{equation*}
tends to ${Z}_{k}$ when $t\rightarrow+\infty$.
The solution is
\begin{equation*}
\begin{aligned}
{x}(t)=\left\{
    \begin{array}{lcl}
       (u_{1}e^{t_{1}},u_{2}e^{t_{2}}), \quad (u_{1},u_{2})\in[-1,1]\times[-1,1], \quad(t_{1},t_{2})\in[0,\ln\frac{1}{|u_{1}|})\times[0,\ln\frac{1}{|u_{2}|}),\\
       (u_{1}e^{t_{1}},(u_{2}-1)e^{-t_{2}}+1), \quad (u_{1},u_{2})\in[-1,1]\times(1,+\infty), \quad t_{1}\in[0,\ln\frac{1}{|u_{1}|}),\\
       (u_{1}e^{t_{1}},(u_{2}+1)e^{-t_{2}}-1), \quad (u_{1},u_{2})\in[-1,1]\times(-\infty,-1), \quad t_{1}\in[0,\ln\frac{1}{|u_{1}|}),\\
       (u_{1}-1)e^{-t_{1}}+1,u_{2}e^{t_{2}}), \quad (u_{1},u_{2})\in(1,+\infty)\times[-1,1], \quad t_{2}\in[0,\ln\frac{1}{|u_{2}|}),\\
       (u_{1}+1)e^{-t_{1}}-1,u_{2}e^{t_{2}}), \quad (u_{1},u_{2})\in(-\infty,-1)\times[-1,1], \quad t_{2}\in[0,\ln\frac{1}{|u_{2}|}),\\
       (u_{1}-1)e^{-t_{1}}+1,(u_{2}-1)e^{-t_{2}}+1), \quad (u_{1},u_{2})\in(1,+\infty)\times(1,+\infty),\\
       (u_{1}+1)e^{-t_{1}}-1,(u_{2}-1)e^{-t_{2}}+1), \quad (u_{1},u_{2})\in(-\infty,-1)\times(1,+\infty),\\
       (u_{1}-1)e^{-t_{1}}+1,(u_{2}+1)e^{-t_{2}}-1), \quad (u_{1},u_{2})\in(1,+\infty)\times(-\infty,-1),\\
       (u_{1}+1)e^{-t_{1}}-1,(u_{2}+1)e^{-t_{2}}-1), \quad (u_{1},u_{2})\in(-\infty,-1)\times(-\infty,-1).
      \end{array}
   \right.
\end{aligned}
\end{equation*}
Define nonnegative function $h(u)$ in such a way that
\begin{equation*}
\begin{aligned}
h|_{\cup_{k=1}^{4}\partial{D_{k}}}=0,
\end{aligned}
\end{equation*}
$h$ and $h'$ are bounded, and
\begin{equation*}
\begin{aligned}
h|_{\cup_{k=1}^{4}{D_{k}^{\circ}}}>0.
\end{aligned}
\end{equation*}
Define the function
\begin{equation*}
\begin{aligned}
{f}({u},{v})=h({v}){V}({u}).
\end{aligned}
\end{equation*}
Then we can define the coefficient ${a}$ as follows:
\begin{equation*}
\begin{aligned}
{a}({u},\mu)
&=\int_{\R^{2}}h({v}){V}({u})\mu(d{v}).
\end{aligned}
\end{equation*}
Now the equation
\begin{equation*}
\begin{aligned}
\left\{
    \begin{array}{lcl}
        d{x}({u},t)={V}({x}({u},t))\int_{\R^{2}}h({v})\mu_{t}(d{v})dt,\\
        {x}({u},0)={u},\quad {u}\in{\R^{2}},\\
        \mu_{t}=\mu_{0}\circ{{x}(\cdot,t)}^{-1}.
      \end{array}
    \right.
\end{aligned}
\end{equation*}
The initial random curve $\{\gamma_{0}(r);r\in[0,1]\}$ is Brownian motion in $\R^{2} $, denote $\gamma_{t}={x}(\gamma_{0},t)$. When $k=1$, the visitation measures for $B({Z}_{1},\varepsilon)$ are
\begin{equation*}
\begin{aligned}
\mu^{1}_{0}(B({Z}_{1},\varepsilon))=\int_{0}^{1}\mathbf{1}_{B({Z}_{1},\varepsilon)}(w(r))dr,
\end{aligned}
\end{equation*}
\begin{equation}\label{2.99}
\begin{aligned}
\mu^{1}_{t}(B({Z}_{1},\varepsilon))
&=\int_{0}^{1}\mathbf{1}_{B({Z}_{1},\varepsilon)}({x}(w(r),t))dr\\
&=\lambda\{r:{x}(w(r),t)\in{B}({Z}_{1},\varepsilon)\}.
\end{aligned}
\end{equation}
For the ${V}$ and ${h}$ defined as before, if ${x}({u},0)\in{D_{1}}$, we have
\begin{equation*}
\begin{aligned}
\forall{t}\geq0, \quad {x}({u},t)\in{D_{1}}, 
\end{aligned}
\end{equation*}
then we can rewrite (\ref{2.99})
\begin{equation*}
\begin{aligned}
\lambda\{r:{x}(w(r),t)\in{B}({Z}_{1},\varepsilon)\}
=&\lambda\{r:{x}(w(r),t)\in{B}({Z}_{1},\varepsilon),w(s)\in{D_{1}}\}\\
=&\lambda\{r:w(r)\in {x}^{-1}({B}({Z}_{1},\varepsilon),t),w(s)\in{D_{1}}\}\\
=&\mu^{1}_{0}({x}^{-1}({B}({Z}_{1},\varepsilon),t)\cap{D}_{1}),
\end{aligned}
\end{equation*}
where $\vec{x}^{-1}(\Delta,t)=\{\vec{u}:\vec{x}(\vec{u},t)\in\Delta\}$.
For any ${u}\in{D}_{k}$,
\begin{equation*}
\begin{aligned}
|{x}({u},t)-{Z}_{k}|\rightarrow0, \quad t\rightarrow\infty.
\end{aligned}
\end{equation*}
Consequently,
\begin{equation*}
\begin{aligned}
&\lim_{\varepsilon\rightarrow0}|\mu^{1}_{t}(B({Z}_{1},\varepsilon))-\mu^{1}_{0}(D_{1})|\\
=&\lim_{\varepsilon\rightarrow0}|\mu^{1}_{0}({x}^{-1}({B}({Z}_{1},\varepsilon),t)\cap{D}_{1})-\mu^{1}_{0}(D_{1})|=0.
\end{aligned}
\end{equation*}

\end{example}


\begin{thebibliography}{0}
\bibitem{1}
H. Kunita, Stochastic flows and stochastic differential equations,
{\small\it Cambridge University Press} (1997).

\bibitem{2}
R. W. R. Darling, Isotropic stochastic flows: a survey,
{\small\it Birkh$\ddot{a}$user Boston} (1992).

\bibitem{3}
P. H. Baxendale, T. E. Harris, Isotropic stochastic flows,
{\small\it Ann. Prob.} {\small\bf 14}(4) (1986) 1155--1179.

\bibitem{4}
A. S. Monin, A. M. Yaglom,
Statistical Fluid Mechanics: Mechanics of Turbulence,
{\small\it Courier Corporation} (2007).

\bibitem{5}
D. Dolgopyat, V. Kaloshin and L. Koralov,
Sample Path Properties of the Stochastic Flows,
{\small\it Ann. Prob.} {\small\bf 32}(1A) (2004) 1--27.

\bibitem{6}
S. Vadlamani, R. Adler,
Global geometry under isotropic Brownian flows,
{\small\it Electron. Comm. Probab.} {\small\bf 11} (2006) 182--192.

\bibitem{7}
C. L. Zirbel, E. Cinlar,
Dispersion of particle systems in Brownian flows,
{\small\it Adv. in Appl. Probab.} {\small\bf 28}(1) (1996) 53--74.

\bibitem{8}
A. A. Dorogovtsev, Measure-valued Markov processes and stochastic flows,
{\small\it Ukrainian Math. J.} {\small\bf 54}(2) (2002) 218--232.

\bibitem{9}
A. A. Dorogovtsev,
Stochastic flows with interaction and measure-valued processes,
{\small\it Int. J. Math. Math. Sci.} (2003) 3963--3977.

\bibitem{10}
A. A. Dorogovtsev,
Measure-valued Markov processes and stochastic flows on abstract spaces,
{\small\it Stoch. Stoch. Rep.}, {\small\bf 76}(5) (2004) 395--407.

\bibitem{11}
A. A. Dorogovtsev,
Measure-valued processes and stochastic flows,
{\small\it Proceedings of Institute of Mathematics of NAS of Ukraine. Mathematics and its Applications.} (2007).

\bibitem{12}
A. A. Dorogovtsev, O. L. Izyumtseva and N. Salhi,
Clark representation for the local self-intersection times of Gaussian integrators,
{\small\it Ukrainian Math. J.} {\small\bf 70}(12) (2019) 1829--1860.

\bibitem{13}
J. Rosen, A local time approach to the self-intersections of Brownian paths in space,
{\small\it Comm. Math. Phys.} {\small\bf 88}(3) (1983) 327--338.

\bibitem{14}
X. Chen, A. Deya, C. Ouyang, and S. Tindel,
Moment estimates for some renormalized parabolic Anderson models,
{\small\it Ann. Probab.} {\small\bf 49}(5) (2021) 2599--2636.

\bibitem{15}
A. A. Dorogovtsev, A. Gnedin, O.L. Izyumtseva,
Self-intersection local times of random fields in stochastic flows,
arXiv preprint arXiv:1910.09492 (2019).

\bibitem{16}
R. M. Dudley,
Real Analysis and Probability,
{\small\it Cambridge University Press} (2002).

\bibitem{17}
Z. H. Yang, Y. M. Chu,
On approximating the modified Bessel function of the second kind,
{\small\it J. Inequal. Appl.} (2017).

\bibitem{18}
D. Applebaum,
Probability on Compact Lie Groups,
{\small\it Springer Cham} (2014).

\bibitem{19}
A. N. Borodin and P. Salminen,
Handbook of Brownian motion-facts and formulae,
{\small\it Birkh${\rm \ddot{a}}$user }(2012).

\bibitem{20}
A. N. Shiryaev,
Probability,
{\small\it Springer, New York} (2019).

\bibitem{21}
V. I. Bogachev,
Gaussian Measures,
{\small\it American Mathematical Soc} (1998).

\bibitem{22}
P. Morters, Y. Peres,
Brownian motion,
{\small\it Cambridge University Pres} (2010).
\bibitem{220}
P. L\'{e}vy,
Sur certains processus stochastiques homog\`{e}nes,
{\small\it Compositio Math.} {\small\bf 7} (1939) 283--339.

\bibitem{27}
J. F. Le Gall, J. Rosen  and N. R. Shieh,
Multiple points of L${\rm \acute{e}}$vy process,
{\small\it Ann. Prob.} {\small\bf 17}(2) (1939) 503--515.

\bibitem{28}
D. Khoshnevisan,
Intersections of Brownian motions,
{\small\it Expo. Math.} {\small\bf 21}(2) (2003) 97--114.

\bibitem{Ya88}
Ya. B. Ze${\rm l'}$dovich, S. A. Molchanov, A. A. Ruzmajkin and D. D. Sokolov,
Intermittency, diffusion and generation in a nonstationary random medium.
{\small\it Electroanalysis.} {\small\bf 7}(12) (1988).2822--2831.


\bibitem{Hol09}
F. Hollander, 
Random Polymers. 
{\small\it  Springer.} (2009).

\bibitem{Bol02}
E. Bolthausen, E. Perkins,  \
Lectures on Probability Theory and Statistics. \
{\small\it  Springer Science $\&$ Business Media.} (2002)

\bibitem{Dor06}
A. A. Dorogovtsev, 
Stochastic integration and a class of Gaussian random processes.
{\small\it  Ukrainian Math. J.} {\small\bf 50}(4) (1998) 550--561.

\end{thebibliography}
\end{document}